\documentclass[11pt,twoside]{article}

\usepackage{amsmath}
\usepackage{amsthm}
\usepackage{amssymb}
\usepackage{amscd}
\usepackage{setspace}
\usepackage{mathrsfs}
\usepackage{color}
\usepackage{multirow}
\usepackage{stmaryrd}	%double brackets (for power series)
\usepackage{mathdots}	%for diagonal dots going up
\usepackage{array}		%for equal spaced centred columns
\usepackage{fancyhdr}
\input xy
\xyoption{all}
\usepackage{rotating}
\usepackage[unicode]{hyperref}	%for hyperlinks
\hypersetup{urlcolor=blue, colorlinks=true}		%set color of links to blue

\oddsidemargin=0pt
\evensidemargin=0pt
\hoffset=0pt
\textwidth=470pt
\topmargin=0pt
\textheight=600pt
\voffset=-25pt
\marginparsep=0pt

\newenvironment{acknowledgements}{\list{}{\rightmargin0.75in\leftmargin0.75in}\item[]\textsc{Acknowledgements.}}{\endlist}

\newtheorem{theorem}{Theorem}[section]
\newtheorem{proposition}[theorem]{Proposition}

\newtheorem{lemma}[theorem]{Lemma}

\newtheorem{conjecture}[theorem]{Conjecture}

\theoremstyle{definition}

\newtheorem{remark}[theorem]{Remark}

\newcolumntype{x}[1]{>{\centering\arraybackslash$}m{#1}<{$}}

\newcommand{\mtrx}[4]{\left(\begin{array}{cc} #1 & #2 \\ #3 & #4 \\ \end{array}\right)}

\newcommand{\mf}[1]{\mathfrak{#1}}

\newcommand{\QQ}{\mathbf{Q}}
\newcommand{\ZZ}{\mathbf{Z}}

\newcommand{\CC}{\mathbf{C}}

\newcommand{\Zp}{\mathbf{Z}_p}

\newcommand{\wt}[1]{\widetilde{#1}}
\newcommand{\wh}[1]{\widehat{#1}}
\newcommand{\ol}[1]{\overline{#1}}

\renewcommand{\mod}{\operatorname{mod}}
\renewcommand{\Re}{\operatorname{Re}}

\newcommand{\mc}[1]{\mathcal{#1}}

\DeclareMathOperator{\gl}{GL}

\DeclareMathOperator{\Hom}{Hom}

\DeclareMathOperator{\tr}{tr}

\DeclareMathOperator{\Art}{Art}

\DeclareMathOperator{\ad}{ad}

\DeclareMathOperator{\Gal}{Gal}

\DeclareMathOperator{\Frob}{Frob}

\DeclareMathOperator{\Ind}{Ind}

\DeclareMathOperator{\ord}{ord}
\DeclareMathOperator{\Sym}{Sym}

\usepackage[titletoc,title]{appendix}

\usepackage{relsize}

\newcommand{\bigast}{\mathop{\mathlarger{\mathlarger{\mathlarger{\ast}}}}}
\renewcommand{\phi}{\varphi}
\newcommand{\bSel}{\overline{\operatorname{Sel}}}
\newcommand{\chic}{\chi_p}	%cyclotomic character
\renewcommand{\mod}[1]{\text{ (mod }#1)}

\DeclareMathOperator{\Std}{Std}
\DeclareMathOperator{\KL}{KL}
\DeclareMathOperator{\an}{an}
\DeclareMathOperator{\arith}{arith}
\DeclareMathOperator{\Gr}{Gr}
\DeclareMathOperator{\Gross}{Gross}
\DeclareMathOperator{\cts}{cts}
\newcommand{\rightisom}{\overset{{}_{\sim}}{\longrightarrow}}

\setlength{\headheight}{15.2pt}
\fancypagestyle{plain}{
\fancyhf{} % clear all header and footer fields
%\fancyfoot[L]{\footnotesize{$^\ast$Original preprint version: September 24, 2011. This is a postprint version.\\
%		Published version: \textit{International Mathematics Research Notices} 2012; doi:~10.1093/imrn/rns161\\
%		Department of Mathematics, University of Wisconsin--Madison\\
%		\textit{2010 Mathematics Subject Classification:} 11R23, 11F80, 11F67\\
%		\textit{Keywords:} $L$-invariants, $p$-adic $L$-functions, CM modular forms, Symmetric powers}}
%		This research was partially supported by a doctoral research scholarship from the FQRNT}}
%\renewcommand{\footrule}{\rule{2in}{0.5pt}\vss}

%\renewcommand{\footrulewidth}{0.5pt}
}
\pagestyle{fancy}

\fancyhf{}
\fancyhead[LE,RO]{\thepage}
\fancyhead[CE]{\textsc{Robert Harron}}
\fancyhead[CO]{\textsc{Exceptional zero conjecture: the ordinary case}}

\singlespacing

\raggedbottom

\hypersetup{pdfauthor={Robert Harron},pdftitle={The exceptional zero conjecture for symmetric powers of CM modular forms: the ordinary case},pdfkeywords={\textit{p}-adic \textit{L}-functions, \textit{L}-invariants, Special values of \textit{L}-functions, Iwasawa theory, CM modular forms}}

\begin{document}
\title{The exceptional zero conjecture for symmetric powers of CM modular forms: the ordinary case}
\author{Robert Harron}
\date{\today\footnote{Original preprint version: September 24, 2011. This is a postprint version.\newline
		Published version: \textit{International Mathematics Research Notices} \textbf{2013}, no.~16, art.~ID rns161, pp.~3744--3770; \href{http://dx.doi.org/10.1093/imrn/rns161}{doi:~10.1093/imrn/rns161}\newline
		Department of Mathematics, University of Wisconsin--Madison\newline
		\textit{2010 Mathematics Subject Classification:} 11R23, 11F80, 11F67\newline
		\textit{Keywords:} $L$-invariants, $p$-adic $L$-functions, CM modular forms, Symmetric powers}}%$^\ast$}
\maketitle
\thispagestyle{plain}

\begin{abstract}
We prove the exceptional zero conjecture for the symmetric powers of CM cuspidal eigenforms at ordinary primes. In other words, we determine the trivial zeroes of the associated $p$-adic $L$-functions, compute the $L$-invariants, and show that they agree with Greenberg's $L$-invariants. In an appendix, we prove a functional equation for some of the $p$-adic $L$-functions we construct.
\end{abstract}

\maketitle

\tableofcontents

\section{Introduction}%\addcontentsline{toc}{section}{Introduction}

	The exceptional zero conjecture concerns the zeroes of $p$-adic $L$-functions occurring at critical integers for a \textit{trivial reason} (namely, the vanishing of the interpolation factor). Given a ($p$-ordinary) motive $M$, there is conjecturally a $p$-adic function $L_p(s,M,\chi)$ (the \textit{analytic $p$-adic $L$-function of $M$}) interpolating the twisted values of the (usual) archimedean $L$-function $L(s,M,\chi)$ at its critical integers (in the sense of Deligne's article \cite{D79}). This interpolation takes the form
	\begin{equation}
		L_p(a,M,\chi)=\mc{E}(a,M,\chi)\frac{L(a,M,\chi^{-1})}{\Omega_{a,M,\chi}}
	\end{equation}
	where $\mc{E}(a,M,\chi)$ is a fairly simple arithmetic fudge factor called the \textit{interpolation factor}, $\Omega_{a,M,\chi}$ is some period (present to obtain algebraicity of values), and $a$ varies over the critical integers of $M$. For certain choices of $a$ and $\chi$ (and $M$), it can happen that $\mc{E}(a,M,\chi)$ vanishes, causing a trivial vanishing of the $p$-adic $L$-function. Such an occurrence is called a \textit{trivial zero} (or an \textit{exceptional zero}) of $L_p(s,M,\chi)$. Since the $p$-adic interpolation of $L$-values is a fruitful approach to understanding their arithmetic, it is worth attempting to correct the loss of information that occurs at a trivial zero. This is where the exceptional zero conjecture comes in. The factor $\mc{E}(a,M,\chi)$ is a product of ``Euler-like'' factors, some of which may vanish for a specific triple $(a_0,M,\chi_0)$. Denote by $e(a_0,M,\chi_0)$ the number of vanishing factors and let $\mc{E}^+(a_0,M,\chi_0)$ be $\mc{E}(a_0,M,\chi_0)$ with the vanishing factors removed. Supposing $L(a_0,M,\chi_0)\neq0$, the exceptional zero conjecture for $(a_0, M, \chi_0)$ can be then be stated in two parts.

	\begin{conjecture}[Exceptional zero conjecture, vaguely]\footnote{
	We say \textit{vaguely} here not only because we have left several terms undefined, but mostly since our notation $L_p(s,M,\chi)$ is more evocative than accurate. Really there are $p-1$ branches of the $p$-adic $L$-function and the limit should be taken along the branch where the trivial zero occurs. For the precise formulas we prove, see Theorems \ref{TheoremA} and \ref{TheoremB} below.
	}\mbox{}\label{conj:Linvar}
		\begin{enumerate}
			\item The order of vanishing of $L_p(s,M,\chi_0)$ at $s=a_0$ is $e(a_0,M,\chi_0)$. We may therefore define the ``analytic $L$-invariant'' $\mc{L}_p^{\an}(a_0,M,\chi_0)\in\CC_p^\times$ by
				\[ \lim_{s\rightarrow a_0}\frac{L_p(s,M,\chi_0)}{(s-a_0)^{e(a_0,M,\chi_0)}}=\mc{L}_p^{\an}(a_0,M,\chi_0)\mc{E}^+(a_0,M,\chi_0)\frac{L(a_0,M,\chi_0^{-1})}{\Omega_{a_0,M,\chi_0}}.
				\]
			\item
				There is an ``arithmetic $L$-invariant'' $\mc{L}_p^{\arith}(a_0,M,\chi_0)\in\CC_p$, defined in terms of the arithmetic of $M$, such that
				\[\mc{L}_p^{\an}(a_0,M,\chi_0)=\mc{L}_p^{\arith}(a_0,M,\chi_0).
				\]
		\end{enumerate}
	\end{conjecture}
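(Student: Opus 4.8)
The plan is to exploit the CM structure to reduce everything to the abelian (Hecke-character) setting, where both the $p$-adic $L$-function and the $L$-invariants become computable. Write $f$ for the CM eigenform, attached to an algebraic Hecke character $\psi$ of an imaginary quadratic field $K$ in which $p$ splits as $p=\mf{p}\ol{\mf{p}}$ (this splitting is exactly the ordinary hypothesis), and let $M$ be its motive, with Galois representation $\rho_f=\Ind_{G_K}^{G_\QQ}\psi$. First I would decompose the symmetric power: since $\rho_f|_{G_K}=\psi\oplus\psi^c$ (with $c$ the nontrivial element of $\Gal(K/\QQ)$), one has
\[
	\Sym^n\rho_f\big|_{G_K}=\bigoplus_{j=0}^{n}\psi^{\,j}(\psi^c)^{\,n-j},
\]
and descending back to $G_\QQ$ pairs the terms with $j\neq n-j$ into induced two-dimensional pieces $\Ind_{G_K}^{G_\QQ}\!\big(\psi^{\,j}(\psi^c)^{\,n-j}\big)$, while for $n$ even the central term $j=n/2$ contributes a one-dimensional piece (a Dirichlet character times a power of the cyclotomic character). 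Thus $\Sym^n M$ breaks up as a direct sum of motives of Hecke characters of $K$ together with, in the even case, a Tate-twisted Dirichlet motive.

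Correspondingly, I would factor $L_p(s,\Sym^n M,\chi)$ into a product of Katz two-variable $p$-adic $L$-functions for the Hecke characters above and, in the even case, a Kubota--Leopoldt $p$-adic $L$-function for the central Dirichlet piece. The trivial zero then localizes to a single factor: the interpolation factor $\mc{E}$ vanishes precisely when some local Euler factor at $p$ degenerates, i.e.\ when a Frobenius eigenvalue of the relevant Hecke character becomes $1$, which for a split ordinary prime is dictated by the value of the character at $\mf{p}$ or $\ol{\mf{p}}$. Identifying which subquotient carries the zero (and checking that the complementary complex $L$-value is nonzero) isolates the problem to computing the leading term of one Katz or Kubota--Leopoldt $p$-adic $L$-function at a point where its interpolation factor vanishes.

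On the analytic side, the invariant $\mc{L}_p^{\an}$ is the first derivative of that degenerate factor, which is governed by the known derivative formulas for the Katz $p$-adic $L$-function (a $p$-adic Kronecker-limit/Gross-type formula). This yields an expression of the shape $-\log_p(u)/\ord_p(u)$ for a $p$-unit $u$ attached to $\psi$ at $\mf{p}$, entirely parallel to the Mazur--Tate--Teitelbaum invariant. On the arithmetic side, I would compute Greenberg's $\mc{L}_p^{\arith}$ directly: because the representation is abelian, the ordinary filtration and the local-to-global map defining the invariant are accessible through local class field theory at $p$, with the relevant cohomology class pinned down by the action of $\Frob_{\mf{p}}$, producing again a ratio of a $p$-adic logarithm to a valuation of the same quantity.

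The main obstacle is the final matching $\mc{L}_p^{\an}=\mc{L}_p^{\arith}$: bridging the analytic derivative (a logarithmic derivative of a Katz $p$-adic $L$-function) and Greenberg's cohomological invariant requires an explicit reciprocity law identifying the $p$-adic logarithm that appears in Katz's derivative formula with the one emerging from the Bloch--Kato/Greenberg construction. I expect the heart of the argument to be precisely this comparison, carried out via the interpolation property of the Katz $p$-adic $L$-function together with an explicit description of the dual exponential map in the CM setting; by contrast, the bookkeeping of the periods $\Omega$ and of the modified factor $\mc{E}^+$ across the decomposition should be routine.
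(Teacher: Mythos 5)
Your skeleton is the paper's: use $\rho_f|_{G_F}=\eta\oplus\eta^c$ to split $\Sym^n\rho_f$ into two-dimensional pieces induced from Hecke characters of $F$ plus, for even $n$, a one-dimensional central piece; factor the $p$-adic $L$-function accordingly; localize the trivial zero in a single factor; and transfer both $L$-invariants along the decomposition. Two deviations are worth noting. First, the paper twists by $(\det\rho_f)^{-m}$ so that the central piece is exactly the quadratic character $\theta_F^m$ and the induced pieces become ordinary newforms $f_j$ of odd weight $2j(k-1)+1$ and level prime to $p$; it then uses the cyclotomic Manin--Amice--V\'elu--Vi\v{s}ik measures $\mu_{f_j}$ convolved with the Kubota--Leopoldt (pseudo-)measure, rather than Katz two-variable $p$-adic $L$-functions. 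Your Katz route is workable in principle but imports a comparison of CM periods with the Shimura/Deligne periods $\Omega_{a,M,\chi}$ that you wave off as ``routine bookkeeping''; the paper's choice avoids this entirely.

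The genuine gaps are the following. (a) You never settle \emph{which} factor carries the trivial zero: in the ordinary, level-prime-to-$p$ case the $f_j$ admit no trivial zeroes (Mazur--Tate--Teitelbaum, \S II.15), and a parity analysis of the interpolation factors shows the central character $\theta_F^m$ produces one if and only if $m$ is odd (so $n\equiv 2\bmod 4$), at $s=0,1$ on the branches $i=0,1$; your ``one Katz or Kubota--Leopoldt factor'' hedge leaves this case analysis, which is the content of the first half of the theorem, undone. (b) What you identify as the main obstacle --- matching $\mc{L}_p^{\an}$ with Greenberg's invariant via a new explicit reciprocity law for the Katz function --- is not open in this setting and is not how the paper proceeds: for the odd character $\theta_F$ the derivative formula $(L_p^{\KL})'(0,\theta_F\omega)=\tfrac{4}{w_F}\log_p(\ol{\pi})$ is Ferrero--Greenberg/Gross--Koblitz, Gross proved his conjecture $(L_p^{\Gross})'=R_p\cdot A$ in the abelian-over-$\QQ$ case, and Greenberg showed $R_p(\theta_F)=\mc{L}_p^{\Gr}(1,\theta_F,\mathbf{1})$; the paper closes the argument by these citations together with a direct-sum lemma for $\mc{L}_p^{\Gr}$ and the duality $\mc{L}_p^{\Gr}(0,\rho_n,\mathbf{1})=-\mc{L}_p^{\Gr}(1,\rho_n^\vee,\mathbf{1})$. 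Since you neither cite these results nor carry out your proposed dual-exponential computation, the heart of your part (ii) is missing as written. (c) Greenberg's invariant is only defined when the balanced Selmer group $\bSel_\QQ(\rho_n)$ vanishes; for the factors $H^1_f(\QQ,\rho_{f_j}(j(k-1)))$ this is a deep input (Rubin's main conjecture for imaginary quadratic fields, via Kato), which local class field theory on the abelian piece cannot supply. Finally, the nonvanishing of the complementary $p$-adic $L$-values at the near-central points, which you correctly flag as needed, rests on Jacquet--Shalika and should be invoked explicitly.
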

	Part (i) conjectures a way of recuperating an interpolation property by looking at the order $e(a_0,M,\chi_0)$ derivative of $L_p(s,M,\chi)$, whereas part (ii) is a supplement to an \textit{Iwasawa--Greenberg Main Conjecture} in the setting of a trivial zero, namely it gives an arithmetic meaning to an analytically defined $L$-invariant.

	In Theorems \ref{TheoremA} and \ref{TheoremB} below, we prove this conjecture for motives arising from symmetric powers of $p$-ordinary newforms \textit{with complex multiplication}. More specifically, the adjoint motive of a newform, which has important applications to its deformation theory, is the symmetric square motive twisted by the inverse of the determinant and we consider appropriate determinant twists of higher symmetric powers. The arithmetic $L$-invariant we use is that suggested by Greenberg in \cite{G94}. We will denote it $\mc{L}_p^{\Gr}(a_0,M,\chi_0)$. In the CM case, the symmetric powers decompose into a sum of Tate twists of modular forms and sometimes a Dirichlet character, with the trivial zero coming from the latter factor. We prove our theorems by reducing them to these already known cases. In particular, the determination of the $L$-invariant is reduced to the results for Dirichlet characters of Ferrero--Greenberg in \cite{FeG78} and Gross and Koblitz in \cite{GrKo79} and \cite[\S4]{Gr81}. In the specific case of the symmetric square of a CM elliptic curve, part of our results were obtained by Dabrowski and Delbourgo in \cite[Theorem 3.6.1]{DabDel97}. Other related results in the literature on symmetric powers of modular forms mostly concern the determination of Greenberg's $L$-invariant. The case of a CM elliptic curve was done in \cite{G94}. Greenberg also covers the case of elliptic curves with split, multiplicative reduction at $p$ in that paper. Generalizations of this latter result to higher weights and to Hilbert modular forms have been obtained by Hida (see, for example, \cite{Hi07}). However, the most difficult case to consider is the non-CM case when $p$ does not divide the level of the modular form $f$. A formula for Greenberg's $L$-invariant was obtained by Hida in the symmetric square case in \cite{Hi04} under some technical assumptions (see also chapter 2 of the author's Ph.D. thesis \cite{H-PhD} for a slightly different approach). In \cite{PR98}, Perrin-Riou has a conditional result on the analytic $L$-invariant of the symmetric square of an elliptic curve. The author has addressed the next highest symmetric power $L$-invariant (i.e.\ the sixth power) in \cite{H10}. The author's current strategy for dealing with higher symmetric powers requires that $f$ not be CM, so it is important to deal with the (simpler) CM case separately.

	Based on what happens for his $L$-invariant of elliptic curves with CM or with split, multiplicative reduction at $p$, Greenberg has raised the question of whether or not the $L$-invariant is independent of the symmetric power $n$ that is taken (\cite[p.~170]{G94}). Our results show that independence holds for CM newforms.
	
	The appendix is devoted to proving Theorem \ref{thm:appendix}, which provides a functional equation for the $p$-adic $L$-functions we construct in the case where trivial zeroes occur.

	Finally, we wish to mention two ways to extend these results. The first is to consider the non-ordinary situation. In this case, the Dirichlet character occurring in the decomposition of the symmetric power does not contribute a trivial zero. In fact, the trivial zeroes come from the (odd weight) modular forms in the decomposition. This should lead to trivial zeroes of higher order and the determination of the $L$-invariant should reduce to the recent work of Denis Benois in \cite{Be11}. This is investigated in upcoming joint work with Antonio Lei (\cite{HLei12}). In another direction, one may pass from $\QQ$ to a totally real field $F$ and consider the $L$-invariants of symmetric powers of the Hilbert modular forms that arise from the induction of a Hecke character of a CM extension of $F$. Let us mention that the generalization of Gross' result has recently been studied by Dasgupta--Darmon--Pollack in \cite{DDP11}.

\begin{acknowledgements}
I would like to thank Rob Pollack, Jay Pottharst, Antonio Lei, and Piper Harron for some helpful conversations. I am also happy to thank the referee for suggestions that made the text clearer, for catching some typos, and for encouraging me to expand some of the content (including the appendix). The content of this article was worked out while I was a postdoctoral fellow at Boston University and a visiting assistant professor at the University of Wisconsin--Madison. I would like to thank these institutions for their hospitality.
\end{acknowledgements}

\subsection{Notation and conventions}
%\addcontentsline{toc}{section}{Notation and conventions}
	Throughout, we fix a rational prime $p\geq3$, algebraic closures $\ol{\QQ}$ of $\QQ$ and $\ol{\QQ}_p$ of $\QQ_p$, and embeddings $\iota_\infty:\ol{\QQ}\rightarrow\CC$ and $\iota_p:\ol{\QQ}\rightarrow\ol{\QQ}_p$. The choice of $\iota_p$ determines an (arithmetic) Frobenius element $\Frob_p\in G_\QQ$ (up to inertia). Similarly, $\iota_\infty$ gives a complex conjugation $\Frob_\infty\in G_\QQ$. Denote by ${\chic:G_\QQ\rightarrow\ZZ_p^\times}$ the $p$-adic cyclotomic character. Let $\CC_p$ be the completion of $\ol{\QQ}_p$ and let ${\log_p:\CC_p^\times\rightarrow\CC_p}$ denote the Iwasawa branch of the $p$-adic logarithm (i.e.\ such that $\log_p(p)=0$).

\subsection{Dirichlet characters and \texorpdfstring{$p$-adic $L$-functions}{p-adic L-functions}}
%\addcontentsline{toc}{subsection}{Dirichlet characters and \texorpdfstring{$p$-adic $L$-functions}{p-adic L-functions}}
	A Dirichlet character will be a character $\chi:(\ZZ/\mf{f}\ZZ)^\times\longrightarrow\CC^\times$. Its conductor will be denoted $\mf{f}_\chi$ and its Gauss sum $\tau(\chi)$. We will identify $\chi$ with a Galois character, also denoted $\chi$, using the normalization $\chi(\Frob_p)=\chi(p)$ for $p\nmid\mf{f}$. A Dirichlet character is even or odd according to whether $\chi(-1)=\chi(\Frob_\infty)$ is $+1$ or $-1$. The $L$-function of the Dirichlet character $\chi$ is defined for $\Re(s)>1$ by
	\[ L(s,\chi)=\sum_{n\geq1}\frac{\chi(n)}{n^s}.
	\]

	Let $\Gamma^\times:=\Gal(\QQ(\mu_{p^\infty})/\QQ)$ be the Galois group over $\QQ$ of the field of $p$-power roots of unity. We will consider our $p$-adic $L$-functions as coming from $p$-adic measures (or pseudo-measures) on $\Gamma^\times$ (see, for example, \cite{Cz00} for these notions). If $\mu$ is such a (pseudo)-measure, it can be evaluated at elements $\chi$ in (some subset of) $\mf{X}^\times:=\Hom_{\cts}(\Gamma^\times,\CC_p^\times)$, the continuous group homomorphisms valued in $\CC_p^\times$. The cyclotomic character gives an isomorphism $\chic:\Gamma^\times\rightisom\Zp^\times$ giving two natural characters $\omega,\langle\cdot\rangle\in\mf{X}^\times$ obtained by composing the cyclotomic character with the natural projections onto the first and second factors of $\Zp^\times=\mu_{p-1}\times(1+p\Zp)$. The finite order characters in $\mf{X}^\times$ are just those arising from $p$-power conductor Dirichlet characters (using $\iota_\infty$ and $\iota_p$). Each such character can be written uniquely as $\chi=\omega^{a}\phi$ where $0\leq a<p-1$ and $\phi$ is a character of $1+p\Zp$ (and hence necessarily even). If $M$ is some ``object'' and $\mu_M$ is a (pseudo-)measure attached to it, we will denote by $L_{p,i}(s,M)$ the $p$-adic function on a subset of $\ZZ_p$ given by
	\[ L_{p,i}(s,M):=\int_{\Gamma^\times}\omega^i\langle\cdot\rangle^sd\mu_M
	\]
	and call it the $i$th branch of the $p$-adic $L$-function of $M$. We use the shorthand
	\[ L_p(s,M,\chi):=\int_{\Gamma^\times}\chi\chic^sd\mu_M
	\]
	where $\chi$ is assumed to be of finite order and which makes sense for $s\in\ZZ$.

\subsection{Hecke characters and CM modular forms}
%\addcontentsline{toc}{subsection}{Hecke characters and CM modular forms}
	We will be studying holomorphic cusp forms of integral weight $k\geq2$ on the upper-half plane. We refer to \cite{Ri77} for the notion of newform with Nebentypus and to sections 3 and 4 of that article for the notion of, and basic facts concerning, CM modular forms. In particular, the term newform will imply a normalized cuspidal eigenform for the Hecke algebra generated by all Hecke operators $T_n$. Our conventions regarding Hecke characters also follow \textit{loc.\ cit.} Finally, if
	\[ f(z)=\sum_{n\geq1}a_ne^{2\pi inz}
	\]
	is the $q$-expansion of the newform $f$, then its $L$-function is defined for $\Re(s)>(k+1)/2$ by
	\[ L(s,f)=\sum_{n\geq1}\frac{a_n}{n^s}.
	\]

\section{Symmetric powers of ordinary CM modular forms}\label{sec:symCM}
	Let $f$ be a $p$-ordinary\footnote{In the sense that $\ord_p(\iota_p\iota_\infty^{-1}a_p)=0$, where $a_p$ is the $p$th Fourier coefficient.} newform of weight $k\geq2$, of $\Gamma_1$-level prime to $p$ with Nebentypus character $\psi$. Let $\QQ(f)\subseteq\CC$ be the field generated by the Fourier coefficients of $f$. Let $K$ be the completion of $\iota_p\iota_\infty^{-1}\QQ(f)$ in $\ol{\QQ}_p$ and let $\rho_f:G_\QQ\longrightarrow\gl(V_f)$ be the contragredient of the $p$-adic Galois representation (occurring in \'etale cohomology) attached to $f$ by Deligne (\cite{D71}) on the two-dimensional vector space $V_f$ over $K$. Its determinant is $\psi\chic^{k-1}$. By the ordinarity assumption, (exactly) one of the roots of the Hecke polynomial $x^2-a_px+\psi(p)p^{k-1}$ of $f$ is a $p$-adic unit (in $\iota_p(\ol{\QQ})$); we denote it by $\alpha_p$ and denote the non-unit root by $\beta_p$. Wiles (\cite[Theorem~2.1.4]{W88}) shows that, for $f$ as above,
	\begin{equation}\label{eqn:rhoford}
		\rho_f|_{G_p}\sim\mtrx{\psi\chic^{k-1}\delta^{-1}}{\ast}{0}{\delta}
	\end{equation}
	where $G_p$ is the decomposition group at $p$ determined by the embedding $\iota_p$ and $\delta$ is the (non-trivial) unramified character sending $\Frob_p$ to $\alpha_p$.
	
	Now suppose $f$ has complex multiplication. Then, there is an imaginary quadratic field $F/\QQ$, an algebraic Hecke character $\wh{\eta}$ of $F$ of type $(k-1,0)$ and a (continuous) character $\eta:G_F\rightarrow \overline{\QQ}_p^\times$, corresponding to $\wh{\eta}$ under class field theory (and the choices of $\iota_\infty$ and $\iota_p$), such that $\rho_f=\Ind_F^\QQ\eta$. Letting $\tau$ denote an element in $G_\QQ$ not in $G_F$, we thus have
	\begin{equation}\label{eqn:rhofCM}
		\rho_f(g)=\left\{\begin{array}{ll}
						\mtrx{\eta(g)}{}{}{\eta(\tau g\tau^{-1})} & \text{if }g\in G_F, \\
						\\
						\mtrx{}{\eta(g\tau^{-1})}{\eta(\tau g)}{} & \text{if }g\not\in G_F.
					\end{array}
		\right.
	\end{equation}
	\begin{lemma}
		If $f$ is as above, i.e.\ a $p$-ordinary newform of weight $k\geq2$, $\Gamma_1$-level prime to $p$, Nebentypus character $\psi$, with CM by $F$, then the decomposition group at $p$, $G_p$, is contained in $G_F$. In particular, $p$ splits in $F$.
	\end{lemma}
	This is well-known, but we include a proof for the convenience of the reader. We also remark that if $p$ is inert, then $f$ is not ordinary at $p$.
	\begin{proof}
		For $\gamma\in I_p$, the inertia group at $p$, we have from \eqref{eqn:rhoford} that $\tr(\rho_f(\gamma))=\chic(\gamma)^{k-1}+1$. Since $\chic$ maps $I_p$ onto $\Zp^\times$, we may pick $\gamma_0\in I_p$ so that $\chic(\gamma_0)\neq\pm1$. Hence, $\tr(\rho_f(\gamma_0))\neq0$, so $\gamma_0\in G_F$, and furthermore $\rho_f(\gamma_0)$ has distinct eigenvalues. Denote by $W_1,W_2$ its (distinct) eigenspaces. These must provide the basis used in \eqref{eqn:rhofCM}. Suppose there is some $\gamma\in G_p$ such that $\gamma\not\in G_F$. Then, its matrix in a basis given by $W_1$ and $W_2$ is antidiagonal. It is a simple exercise in linear algebra to check that an antidiagonal matrix and a diagonal matrix with distinct eigenvalues cannot be simultaneously upper-triangularized. This contradicts \eqref{eqn:rhoford}.
	\end{proof}

	Let $\theta_F:G_\QQ\rightarrow\ZZ_p^\times$ be the quadratic character associated to the extension $F/\QQ$ by class field theory (so that $\theta_F(g)=-1$ if and only if $g\not\in G_F$). Then, $\theta_F(p)=1$ and since complex conjugation acts non-trivially on $F$, $\theta_F(\Frob_\infty)=-1$, i.e.\ $\theta_F$ is odd.

	For the rest of this section, we will consider even $n=2m$. This is a necessary condition for a trivial zero to occur; see the proof of Theorem \ref{TheoremA} for a discussion of the case of odd $n$.
	Taking the $n$th symmetric power of $\rho_f$ yields
	\begin{equation}
		\Sym^n\!\rho_f(g)=\begin{cases}
						\left(\begin{array}{cccc}
							\eta^n(g) \\
							&\eta^{n-1}(g)\eta(\tau g\tau^{-1}) \\
							&&\ddots \\
							&&&\eta^n(\tau g\tau^{-1})
						\end{array}\right) & \text{if }g\in G_F, \\
						\\
						\left(\begin{array}{cccc}
							&&&\eta^n(g\tau^{-1}) \\
							&&\iddots \\
							&\eta^{n-1}(\tau g)\eta(g\tau^{-1}) \\
							\eta^n(\tau g)
						\end{array}\right) & \text{if }g\not\in G_F
					\end{cases}
	\end{equation}
	in some basis, say, $v_0,\dots, v_n$. The entry in the middle of this matrix representation is given by
	\[	\begin{cases}
			\eta^m(g)\eta^m(\tau g\tau^{-1})	& \text{if }g\in G_F, \\
			\eta^m(\tau g)\eta^m(g\tau^{-1})	& \text{if }g\not\in G_F.
		\end{cases}
	\]
	We may rewrite this as $(\theta_F\det\rho_f)^m$. To identify the remaining entries, we change basis to $v_0,v_n,v_1,v_{n-1},v_2,\dots, v_{n/2}$. The matrix is then the direct sum of $(\theta_F\det\rho_F)^m$ and two-by-two blocks that we can write as
	\[	\begin{cases}
			\mtrx{\eta^{n-2i}(g)(\theta_F(g)\det\rho_f(g))^i}{}{}{\eta^{n-2i}(\tau g\tau^{-1})(\theta_F(g)\det\rho_f(g))^i}	& \text{if }g\in G_F, \\
			\\
			\mtrx{}{\eta^{n-2i}(g\tau^{-1})(\theta_F(g)\det\rho_f(g))^i}{\eta^{n-2i}(\tau g)(\theta_F(g)\det\rho_f(g))^i}{}	& \text{if }g\not\in G_F,
		\end{cases}
	\]
	for $0\leq i\leq m-1$. As alluded to in the introduction, we are interested in generalizing results obtained for the $L$-invariant of $\ad^0\!\rho_f$, the trace zero endomorphisms of $\rho_f$. In the representation theory of $\gl(2)$,
	\[ \ad^0\Std\cong(\Sym^2\!\Std)\otimes\operatorname{det}^{-1},
	\]
	where $\Std$ denotes the standard two-dimensional representation of $\gl(2)$. We thus view
	\begin{equation}
		\rho_n:=(\Sym^n\!\rho_f)\otimes(\det\rho_f)^{-m}
	\end{equation}
	as the proper generalization of the adjoint representation. We remark that the associated motive has weight $0$ so that the centre of the functional equation is $s=1/2$ and it will turn out that the trivial zeroes will occur at the near-central points. Twisting the above matrices by $(\det\rho_f)^{-m}$, we obtain
	\begin{equation}
		\rho_n\cong\theta_F^m\oplus\bigoplus_{i=0}^{m-1}\left(\left(\Ind_F^\QQ\eta^{2(m-i)}\right)\otimes\chic^{(m-i)(1-k)}\otimes\psi^{i-m}\theta_F^i\right).
	\end{equation}
		%%%NEW
	\begin{proposition}
		For each integer $j$ with $1\leq j\leq m$, there is a $p$-ordinary newform $f_j$ of weight $2j(k-1)+1$, level $N_j|N\mf{f}^2_{\psi^{-j}\theta_F^{1-j}}$, Nebentypus character $\theta_F$ with CM by $F$ such that
		\begin{equation}
			\rho_n\cong\theta_F^m\oplus\bigoplus_{j=1}^m\rho_{f_j}(-j(k-1)).\label{eqn:symndecomp}
		\end{equation}
	\end{proposition}
	\begin{proof}
	Since $\wh{\eta}$ is of type $(k-1,0)$, the Artin reciprocity map applied to $\eta^{2(m-i)}$ gives us Hecke characters of $F$ of type $(2(m-i)(k-1),0)$ that we denote $\wh{\eta}_j$ (for $1\leq j:=m-i\leq m$). These in turn yield newforms $f_j^\prime$ of weight $2j(k-1)+1$, which we must twist by $\psi^{-j}\theta_F^i$. This latter operation may result in imprimitive forms, but we let $f_j$ denote the unique newform (of level $N_j|N\mf{f}^2_{\psi^{-j}\theta_F^i}$) whose Hecke eigenvalues agree with those of the possibly imprimitive form $f^\prime_j\otimes\psi^{-j}\theta_F^i$ outside of the primes dividing $N\mf{f}_{\psi^{-j}\theta_F^i}$.
	
	The Nebentypus character of $f_j^\prime$ is determined by $\wh{\eta}_j$ (see \cite[p.~35]{Ri77}). Specifically, viewing $\wh{\eta}$ as a character on fractional ideals of $F$, we have that $\psi$ is the product of $\theta_F$ and the character $\wt{\eta}:a\mapsto\wh{\eta}((a))/a^{k-1}$. Similarly, the Nebentypus character $\psi_j^\prime$ of $f_j^\prime$ is $\wt{\eta}^{2j}\theta_F$. The effect on $\psi_j^\prime$ of twisting $f_j^\prime$ by $\psi^{-j}\theta_F^i$ is to multiply it by $(\psi^{-j}\theta_F^i)^2$. Thus, the Nebetypus character $\psi_j$ of $f_j$ is $\wt{\eta}^{2j}\theta_F(\wt{\eta}^{-j}\theta_F^{i-j})^2=\theta_F$ as claimed.
	
	By \eqref{eqn:rhofCM}, $\alpha_p$ is either $\eta(\Frob_p)$ or $\eta(\tau\Frob_p\tau^{-1})$. Either way, $\alpha_p^{2j}$ is a root of the Hecke polynomial of $f_j^\prime$ and therefore the $p$th Fourier coefficient of $f_j^\prime$ is a $p$-adic unit. Multiplying the latter by $\psi^{-j}(p)\theta_F^i(p)$ yields the $p$th Fourier coefficient of $f_j$, which shows that $f_j$ is $p$-ordinary.
	\end{proof}
	This proposition has two important consequences. First, we get a product decomposition of the $L$-function of $\rho_n$ as
	\begin{equation}\label{eqn:symnLdecomp}
		L(s,\rho_n)=L(s,\theta_F^m)\cdot\prod_{j=1}^mL(s+j(k-1),f_j).
	\end{equation}
	Second, $\rho_n$ is self-dual, i.e.\ $\rho_n\cong\rho_n^\vee$, where $\rho_n^\vee$ is the linear dual of $\rho_n$. This can be seen as follows. Recall that the contragredient $g^\vee$ of a newform $\displaystyle g=\sum_{n\geq1}b_nq^n$ of weight $\kappa$ is given by
	\[ g^\vee=\sum_{n\geq1}\ol{b}_nq^n
	\]
	and that $\rho_g^\vee\cong\rho_{g^\vee}(-\kappa)$. But \cite[Proposition 3.3]{Ri77} shows that the $f_j$ have totally real Fourier coefficients. Consequently, the representations $\rho_{f_j}(-j(k-1))$ are self-dual. We will not use the isomorphism $\rho_n\cong\rho_n^\vee$ in the main body of this paper, though it underlies the content of the appendix.
	
	%%%END NEW
	The critical integers $C_{n,k}$ of $L(s,\rho_n)$ depend on the parities of both $m$ and $k$. When $m$ is odd and $k$ is even, the critical integers are
	\begin{equation}
		C_{n,k}=\left\{-(k-1)+1,-(k-1)+3,\dots,-2,0,1,3,5,\dots,k-3,k-1\right\}.
	\end{equation}
	If $m$ and $k$ are both odd, then
	\begin{equation}
		C_{n,k}=\left\{-(k-1),-(k-1)+2,\dots,-2,0,1,3,5,\dots,k-4,k-2\right\}.
	\end{equation}
	Otherwise, $C_{n,k}$ consists of the positive even integers $\leq k-1$ and the negative odd integers $\geq -k$. These are simple to work out from the Hodge structures of the corresponding motives.

	In the next section, we will $p$-adically interpolate the values of $L(s,\rho_n)$ at these integers by taking the product of the $p$-adic $L$-functions attached to each $L$-function on the right-hand side of \eqref{eqn:symnLdecomp}. This is a sensible approach since the critical integers of $\rho_n$ are contained in the intersection of the critical integers of the $L$-functions on the right-hand side of the decomposition. Indeed, the critical integers of a cusp form of weight $k^\prime$ are $\{1,2,\dots,k^\prime-1\}$, those of an odd Dirichlet character (such as $\theta_F^m=\theta_F$ for $m$ odd) are the positive odd and non-positive even integers, and those of an even Dirichlet character (such as $\theta_F^m=\mathbf{1}$ for $m$ even) are the positive even and negative odd integers. We remark that this method of defining $p$-adic $L$-functions for the symmetric powers of CM newforms already appears in the work of Dabrowski (\cite{Dab93}).

\section{The analytic \texorpdfstring{$L$}{L}-invariant}
	As is appropriate, we refer the reader to the section ``Notation and conventions'' for our notation and conventions on $p$-adic $L$-functions. We remark that we will only consider the $p$-adic $L$-function of $\rho_n$ corresponding to the ordinary refinement.

	Suppose that $f$ is a $p$-ordinary newform of weight $k\geq2$, level $\Gamma_1(N)$, with $p\nmid N$, and Neben\-typus character $\psi$. Recall that $\alpha_p$ (resp.\  $\beta_p$) denotes the unit root (resp.\ non-unit root) of the Hecke polynomial of $f$. Then, through the work of Manin, Amice--V\'elu, Vi\v{s}ik, and Shimura, there exist $\Omega_f^+,\Omega_f^-\in\CC^\times$ and a $\CC_p$-valued $p$-adic measure $\mu_f$ on $\Gamma^\times$ such that for any finite order character $\chi\in\mf{X}^\times$ and $1\leq a\leq k-1$, an integer,
	\begin{equation}\label{eqn:MFp-adicL}
		\iota_p^{-1}\int_{\Gamma^\times}\chi\chic^ad\mu_f=\iota_\infty^{-1}\!\left(\frac{L(a,f,\chi^{-1})}{\Omega_{a,f,\chi}}\right)\left(1-\frac{p^{a-1}\chi(p)}{\alpha_p}\right)\left(1-p^{-a}\chi^{-1}(p)\beta_p\right),
	\end{equation}
	where, writing $\mf{f}_{\chi}=p^r$,
	\[ \frac{1}{\Omega_{a,f,\chi}}=\frac{\Gamma(a)\mf{f}_{\chi^{-1}}^a}{\tau(\chi^{-1})(-2\pi i)^a\alpha_p^r}\cdot
								\begin{cases}
									\displaystyle\frac{1}{\Omega_f^+} & \text{if }(-1)^a\chi(-1)=1 \\
									\\
									\displaystyle\frac{1}{\Omega_f^-} & \text{if }(-1)^a\chi(-1)=-1.
								\end{cases}
	\]
	As discussed in \cite[\S II.15]{MTT86}, no trivial zeroes can occur for $f$ as above (namely, $p$-ordinary with $p\nmid N$).

	Consequently, the trivial zeroes of $\rho_n$ must come from the Dirichlet character $\theta_F^m$. This character is trivial unless $m$ is odd. Let us study the trivial case first. It follows from the classical results of Kubota--Leopoldt and Iwasawa that there is a $\CC_p$-valued $p$-adic pseudo-measure\footnote{
	Let us remark that we have normalized our choices so that
	\[ L_{p,i}(s,\mathbf{1})=\begin{cases}
						L_p^{\KL}(s,\omega^{1-i}) & \text{if }i\text{ is odd,} \\
						L_p^{\KL}(1-s,\omega^i) & \text{if }i\text{ is even,}
					\end{cases}
	\]
	where $L_p^{\KL}(s,\chi)$ is the classical Kubota--Leopoldt $p$-adic $L$-function attached to the even character $\chi$.
	} $\mu_\mathbf{1}$ on $\Gamma^\times$ such that
	for any finite order character $\chi\in\mf{X}^\times$ and for an integer $a$
	\begin{equation}\label{eqn:evenDirp-adicL}
		\iota_p^{-1}\int_{\Gamma^\times}\chi\chic^ad\mu_\mathbf{1}=\iota_\infty^{-1}\!\left(\frac{L(a,\chi^{-1})}{\Omega_{a,\mathbf{1},\chi}}\right)\cdot\begin{cases}
												\left(1-p^{-a}\chi^{-1}(p)\right) & \text{if }(-1)^a\chi(-1)=-1, a\leq0 \\
												\left(1-p^{a-1}\chi(p)\right) & \text{if }(-1)^a\chi(-1)=1, a\geq1,
											\end{cases}
	\end{equation}
	where
	\begin{equation}
		\frac{1}{\Omega_{a,\mathbf{1},\chi}}=\begin{cases}
									1 & \text{if }(-1)^a\chi(-1)=-1, a\leq0, \\
									\frac{2\Gamma(a)\mf{f}^a_{\chi^{-1}}}{\tau(\chi^{-1})(-2\pi i)^a} & \text{if }(-1)^a\chi(-1)=1, a\geq1.
								\end{cases}
	\end{equation}
	Thus, no trivial zero can occur for the trivial character $\mathbf{1}$. Indeed, any trivial zero of this function must occur at $a=0$ or $1$. If $a=0$, then, for criticality, $\chi$ must be odd, and in particular non-trivial, so that $\chi^{-1}(p)=0$. The same reasoning applies to $a=1$.

	Now, let $\theta$ be an odd Dirichlet character of conductor prime to $p$. Then, there is a $\CC_p$-valued $p$-adic measure\footnote{
	For odd $\theta$, our choice is such that
	\[ L_{p,i}(s,\theta)=\begin{cases}
						L_p^{\KL}(s,\theta\omega^{1-i}) & \text{if }i\text{ is even,} \\
						L_p^{\KL}(1-s,\theta^{-1}\omega^i) & \text{if }i\text{ is odd.}
					\end{cases}
	\]
	} $\mu_\theta$ on $\Gamma^\times$ such that
	for any finite order character $\chi\in\mf{X}^\times$ and for an integer $a$
	\begin{equation}\label{eqn:Dirp-adicL}
		\iota_p^{-1}\int_{\Gamma^\times}\chi\chic^ad\mu_\theta=\iota_\infty^{-1}\!\left(\frac{L(a,\theta\chi^{-1})}{\Omega_{a,\theta,\chi}}\right)\cdot\begin{cases}
												\left(1-p^{-a}(\theta\chi^{-1})(p)\right) & \text{if }(-1)^a\chi(-1)=1, a\leq0, \\
												\left(1-p^{a-1}(\theta^{-1}\chi)(p)\right) & \text{if }(-1)^a\chi(-1)=-1, a\geq1,
											\end{cases}
	\end{equation}
	where
	\begin{equation}\label{eqn:OmegaDirp-adicL}
		\frac{1}{\Omega_{a,\theta,\chi}}=\begin{cases}
									1 & \text{if }(-1)^a\chi(-1)=1, a\leq0, \\
									\frac{2\Gamma(a)\mf{f}^a_{\theta\chi^{-1}}}{\tau(\theta\chi^{-1})(-2\pi i)^a} & \text{if }(-1)^a\chi(-1)=-1, a\geq1.
								\end{cases}
	\end{equation}
	Here, the parities work out so that trivial zeroes can occur. Again, a trivial zero can only occur for $a=0$ or $1$, in which cases one must also have that $\chi$ is trivial and $\theta(p)=1$.

	Inspired by the decomposition of $L$-functions in \eqref{eqn:symnLdecomp}, we define a ($\CC_p$-valued) $p$-adic measure (or pseudo-measure if $m$ is even) $\mu_{\rho_n}$ on $\Gamma^\times$ as a convolution
	\begin{equation}\label{eqn:rhondefn}
		\mu_{\rho_n}:=\mu_{\theta_F^m}\ast\bigast_{j=1}^m\left(\chic^{j(k-1)}\mu_{f_j}\right).
	\end{equation}
	Theorem \ref{thm:appendix} of the appendix proves a functional equation for $\mu_{\rho_n}$ when $m$ is odd.

	As we have seen that no trivial zero can occur when $m$ is even, we restrict from now on to the interesting case where $m$ is odd. It then follows from \eqref{eqn:Dirp-adicL} and \eqref{eqn:MFp-adicL} that for any finite order character $\chi\in\mf{X}^\times$ and any $a\in C_{n,k}$
	\begin{align}
		\iota_p^{-1}\int_{\Gamma^\times}\chi\chic^ad\mu_{\rho_n}&=\iota_\infty^{-1}\!\left(\frac{L(a,\theta_F\chi^{-1})}{\Omega_{a,\theta_F,\chi}}\cdot
														\prod_{j=1}^m\frac{L(a+j(k-1),f_j,\chi^{-1})}{\Omega_{a+j(k-1),f_j,\chi}}\right) \nonumber \\
											&\times \prod_{j=1}^m\left(1-\frac{p^{a+j(k-1)-1}\chi(p)}{\alpha_{j,p}}\right)\left(1-p^{-a-j(k-1)}\chi^{-1}(p)\beta_{j,p}\right) \nonumber \\
											&\times \begin{cases}
												\left(1-p^{-a}(\theta_F\chi^{-1})(p)\right) & \text{if }(-1)^a\chi(-1)=1, a\leq0 \\
												\left(1-p^{a-1}(\theta_F\chi)(p)\right) & \text{if }(-1)^a\chi(-1)=-1, a\geq1,
											\end{cases} \nonumber \\
											=\iota_\infty^{-1}\!\left(\frac{L(a,\rho_n,\chi^{-1})}{\Omega_{a,\rho_n,\chi}}\right)
											&\times \prod_{j=1}^m\left(1-\frac{p^{a+j(k-1)-1}\chi(p)}{\alpha_{j,p}}\right)\left(1-p^{-a-j(k-1)}\chi^{-1}(p)\beta_{j,p}\right) \nonumber \\
											&\times \begin{cases}
												\left(1-p^{-a}(\theta_F\chi^{-1})(p)\right) & \text{if }(-1)^a\chi(-1)=1, a\leq0 \\
												\left(1-p^{a-1}(\theta_F\chi)(p)\right) & \text{if }(-1)^a\chi(-1)=-1, a\geq1,
											\end{cases}
	\label{eqn:p-adicL}
	\end{align}
	where $\alpha_{j,p}$ (resp.~$\beta_{j,p}$) is the unit root (resp.~non-unit root) of the Hecke polynomial of $f_j$ and we have set
	\[ \Omega_{a,\rho_n,\chi}:=\Omega_{a,\theta_F,\chi}\cdot\prod_{j=1}^m\Omega_{a+j(k-1),f_j,\chi}.
	\]
	The trivial zeroes of $\rho_n$ occur at $L_p(0,\rho_n,\mathbf{1})=L_{p,0}(0,\rho_n)$ and $L_p(1,\rho_n,\mathbf{1})=L_{p,1}(1,\rho_n)$, i.e.\ in the $0$th and $1$st branch, respectively. The definition \eqref{eqn:rhondefn} gives product decompositions
	\begin{equation}\label{eqn:padicLdecomp}
		L_{p,i}(s,\rho_n)=L_{p,i}(s,\theta_F)\cdot\prod_{j=1}^mL_{p,i}(s+j(k-1),f_j).
	\end{equation}
	For each $j$, $s=j(k-1)$ and $1+j(k-1)$ are the near-central points for $f_j$. The work of Jacquet--Shalika in \cite{JSh76} shows that the archimedean $L$-values at these points are non-zero. We may thus conclude that
	\[ L_{p,i}(j(k-1),f_j)\neq0\neq L_{p,i}(1+j(k-1),f_j).
	\]
	Therefore, taking the derivative of both sides in \eqref{eqn:padicLdecomp}, we obtain for $i=0,1$
	\begin{equation}\label{eqn:derivdecomp}
		L^\prime_{p,i}(i,\rho_n)=L^\prime_{p,i}(i,\theta_F)\cdot\prod_{j=1}^mL_{p,i}(i+j(k-1),f_j).
	\end{equation}
	This reduces the problem of computing the $L$-invariants of $\rho_n$ to those of $\theta_F$. These values were obtained by Ferrero and Greenberg in \cite{FeG78}, Gross and Koblitz in \cite{GrKo79}, and placed within a theoretical framework by Gross in \cite{Gr81}. For an odd Dirichlet character $\theta$, let $R_p(\theta)$ be the $p$-adic regulator Gross defines in equation (2.10) of his article. Let $h_F$ denote the class number of $F$ and factor $(p)=\mf{p}\ol{\mf{p}}$ into prime ideals of $F$, where $\mf{p}$ corresponds to the embedding $\iota_p$. Let $\ol{\pi}$ be a generator of the principal ideal $\ol{\mf{p}}^{h_F}$ (well-defined up to a root of unity). We may now state the first theorem of our paper.

	\begin{theorem}\label{TheoremA}
		Let $p$ be an odd prime and let $f$ be a $p$-ordinary newform of weight $k\geq2$, $\Gamma_1$-level prime to $p$, Nebentypus character $\psi$, and CM by $F$. Let $\rho_f$ denote the $p$-adic Galois representation attached to $f$. If $n$ is a positive integer and $m=\lfloor n/2\rfloor$, then the $p$-adic $L$-function of ${\rho_n=(\Sym^n\!\rho_f)\otimes(\det\rho_f)^{-m}}$ has a trivial zero if and only if $n=2m$ with $m$ odd. In such a case, the trivial zeroes of $\rho_n$ are of order 1 and occur at the near-central points $L_p(0,\rho_n,\mathbf{1})$ and $L_p(1,\rho_n,\mathbf{1})$. The analytic $L$-invariants are given by
		\begin{equation}\label{eqn:thmA1}
			\mc{L}^{\an}_p(1,\rho_n,\mathbf{1})=-\frac{2\log_p(\ol{\pi})}{h_F}
		\end{equation}
		and
		\begin{equation}\label{eqn:thmA2}
			\mc{L}^{\an}_p(0,\rho_n,\mathbf{1})=-\mc{L}^{\an}_p(1,\rho_n,\mathbf{1})
		\end{equation}
		and occur in the following interpolation formulas for $i=0$ and $1$
		\begin{equation}\label{eqn:statement}
			L_{p,i}^\prime(i,\rho_n)=\mc{L}^{\an}_p(i,\rho_n,\mathbf{1})\mc{E}^+(i,\rho_n,\mathbf{1})\frac{L(i,\rho_n)}{\Omega_{i,\rho_n,\mathbf{1}}}
		\end{equation}
		where
		\[ \mc{E}^+(i,\rho_n,\mathbf{1})=\prod_{j=1}^m\left(1-\frac{p^{i+j(k-1)-1}\chi(p)}{\alpha_{j,p}}\right)\left(1-p^{-i-j(k-1)}\chi^{-1}(p)\beta_{j,p}\right).
		\]
		We also have the identities
		\begin{equation}\label{eqn:thmA3}
			\mc{L}^{\an}_p(1,\rho_n,\mathbf{1})=\mc{L}^{\an}_p(1,\theta_F,\mathbf{1})=R_p(\theta_F)=\mc{L}_p^{\Gr}(1,\theta_F,\mathbf{1}).
		\end{equation}
	\end{theorem}

	\begin{remark}
		Before proving the theorem, we make a few remarks.
		\begin{enumerate}
			\item This theorem solves part (i) of conjecture \ref{conj:Linvar}. Part (ii) follows from the three equalities
			\begin{equation}\label{rem:eqn1}
				\mc{L}_p^{\Gr}(1,\theta_F,\mathbf{1})=\mc{L}_p^{\Gr}(1,\rho_n,\mathbf{1}),
			\end{equation}
			\begin{equation}\label{rem:eqn2}
				\mc{L}_p^{\Gr}(1,\theta_F,\mathbf{1})=\mc{L}_p^{\Gr}(1,\rho_n^\vee,\mathbf{1}),
			\end{equation}
			and
			\begin{equation}\label{rem:eqn3}
				\mc{L}_p^{\Gr}(0,\rho_n,\mathbf{1})=-\mc{L}_p^{\Gr}(1,\rho_n^\vee,\mathbf{1}).
			\end{equation}
			See \S\ref{sec:GrLinvar} for the proof of these equalities and an explanation of our notation for Greenberg's $L$-invariant.
			\item This shows that the $L$-invariant of $\rho_n$ only depends on the CM field $F$, in particular it is independent of $n$. Greenberg has raised the question of whether the $L$-invariant is independent of $n$ for all newforms $f$. In \cite[p.~170]{G94}, he shows that this is true for his $L$-invariant when either $E$ is an elliptic curve with complex multiplication or with split, multiplicative reduction at $p$. The question for general $f$ is still open.
			\item In Theorem \ref{TheoremB}, we will show that $\mc{L}_p^{\an}(1,\rho_n,\mathbf{1})=-2\log_p(\alpha_p)/(k-1)$, thus generalizing the result of Dabrowski and Delbourgo in \cite[Theorem 3.6.1]{DabDel97}.
			\item The $L$-invariant of $\theta_F$ can be written in terms of Morita's $p$-adic Gamma function, see \cite[Proposition 1]{FeG78}.
			\item Using the results of Gross--Koblitz in \cite{GrKo79} and Katz \cite{K81}, these $L$-invariants can be expressed in terms of Gauss sums.
		\end{enumerate}
	\end{remark}

	\begin{proof}
		We have already explained everything in this theorem except for the order of the trivial zeroes, the values of the $L$-invariants, and the case of odd $n$. We begin with $n$ even. That the analytic $L$-invariants of $\rho_n$ and $\theta_F$ are equal follows from \eqref{eqn:derivdecomp}. As indicated in footnotes above, we have
		\begin{equation}
			L_{p,1}(s,\theta_F)=L_p^{\KL}(1-s,\theta_F\omega)
		\end{equation}
		and
		\begin{equation}
			L_{p,0}(s,\theta_F)=L_p^{\KL}(s,\theta_F\omega).
		\end{equation}
		That these Kubota--Leopoldt $p$-adic $L$-functions vanish to order exactly 1 at $s=0$ is Proposition~2 of \cite{FeG78}.

		Note that
		\[ L_{p,0}(s,\theta_F)=L_{p,1}(1-s,\theta_F),
		\]
		so
		\[ L_{p,0}^\prime(0,\theta_F)=-L_{p,1}^\prime(1,\theta_F).
		\]
		Combining this with the functional equation for Dirichlet $L$-functions gives \eqref{eqn:thmA2} and reduces the proof of the case $i=0$ to that of $i=1$: indeed, if
		\[ L_{p,1}^\prime(1,\theta_F)=\mc{L}_p^{\an}(1,\theta_F,\mathbf{1})\frac{L(1,\theta_F)}{\Omega_{1,\theta_F,\mathbf{1}}},
		\]
		then the functional equation says that the fraction on the right is simply $L(0,\theta_F)$.
		
		The equality between Gross' $p$-adic regulator and Greenberg's $L$-invariant is Proposition 5 of \cite{G94}. $L_p^{\KL}(1-s,\theta_F\omega)$ is what Gross denotes (except for the superscript that we have added) by $L^{\Gross}_p(\omega\otimes\theta_F,1-s)$. His conjecture 2.12 is that
		\[ (L_p^{\Gross})^\prime(\omega\otimes\theta_F,0)=R_p(\theta_F)A(\theta_F),
		\]
		where we see from \cite[(2.16)]{Gr81} that $A(\theta_F)=-L(0,\theta_F)$. This conjecture is proved in \S4 of his paper for the situation in which we find ourselves. Using the functional equation as above yields
		\[ (L_p^{\Gross})^\prime(\omega\otimes\theta_F,0)=-R_p(\theta_F)\frac{L(1,\theta_F)}{\Omega_{1,\theta_F,\mathbf{1}}}.
		\]
		Since
		\[ L^\prime_{p,1}(1,\theta_F)=-(L_p^{\Gross})^\prime(\omega\otimes\theta_F,0)
		\]
		we have proved \eqref{eqn:thmA3}.

		In \cite[\S4]{FeG78} and \cite[(4.12)]{GrKo79}, one finds
		\[ (L_p^{\KL})^\prime(0,\theta_F\omega)=\frac{4}{w_F}\log_p(\ol{\pi}),
		\]
		where $w_F$ is the number of roots of unity in $F$. By the analytic class number formula,
		\[ L(0,\theta_F)=\frac{2h_F}{w_F},
		\]
		so
		\[ (L_p^{\KL})^\prime(0,\theta_F\omega)=\frac{2\log_p(\ol{\pi})}{h_F}L(0,\theta_F),
		\]
		from which the formula in \eqref{eqn:thmA1} follows.

		It remains to deal with the case of odd $n$. Briefly, the $L$-function decomposes into a product of $L$-functions of modular forms (i.e.\ no Dirichlet character shows up in the decomposition). All of the modular forms that show up will once again be ordinary at $p$ and have level prime to $p$. So, as mentioned above, none of them will have trivial zeroes.
	\end{proof}

\section{The arithmetic \texorpdfstring{$L$}{L}-invariant}\label{sec:GrLinvar}
	We will now briefly explain how the results in Theorem \ref{TheoremA} are those predicted by Greenberg's theory of trivial zeroes and we will use \cite[Theorem A]{H-PhD} to prove another formula for the $L$-invariant. As a reference for Greenberg's theory see \cite[\S1]{H10} or \cite{G94}. We will follow the notation of the former.

	We begin by remarking that Greenberg's theory predicts when a Galois representation $V$ has a trivial zero at $s=1$. So, the $L$-invariant denoted by $\mc{L}(V)$ in \cite{H10} is what we denote here by $\mc{L}_p^{\Gr}(1,V,\mathbf{1})$. Furthermore, saying that a trivial zero of $V$ should occur at $s=a_0$ is the same as saying that it should occur at $s=1$ for $V(1-a_0)$.

	The discussion in \cite[\S1.3]{H10} on the expected location of trivial zeroes of the symmetric powers of $f$ applies here as well.\footnote{
	There the Nebentypus was assumed to be trivial, whereas here we only assume it has conductor prime to $p$. However, the same arguments go through without modification.
	}
	Therefore, we can say that the trivial zeroes we found on the analytic side in the previous section are exactly those predicted by Greenberg's arithmetic theory and they occur with the expected order. It then remains to check that the $L$-invariants are as predicted. Let $\rho_n$ be defined as in the previous section, with $n=2m$ and $m$ odd. By definition, $\mc{L}_p^{\Gr}(0,\rho_n,\mathbf{1})=\mc{L}(\rho_n(1))$ which in turn is defined\footnote{This definition is implicit in \cite{G94}.} to be $-\mc{L}((\rho_n(1))^\vee(1))$. Since $(\rho_n(1))^\vee(1)=\rho_n^\vee$, this gives equation \eqref{rem:eqn3} mentioned above. There is a condition that must be satisfied in order for Greenberg's definition of the $L$-invariant of $V$ to make sense, namely that the balanced Selmer group $\bSel_\QQ(V)$ vanishes. As in Proposition 1.3 of \cite{H10}, we have that $\bSel_\QQ(\rho_n)=H^1_f(\QQ,\rho_n)$ where the latter is the likely more familiar Bloch--Kato Selmer group defined in \cite{BK90}. Similarly for $\rho_n^\vee$. Using some deep results of Rubin's on the main conjecture for imaginary quadratic fields, we obtain the following result.
	\begin{proposition}
		The balanced Selmer groups $\bSel_\QQ(\rho_n)$ and $\bSel_\QQ(\rho_n^\vee)$ vanish.
	\end{proposition}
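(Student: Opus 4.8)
The plan is to reduce, via the direct-sum decomposition \eqref{eqn:symndecomp}, to the vanishing of the Bloch--Kato Selmer group of each summand. Since the local conditions $H^1_f(\QQ_v,-)$ are additive, one has $H^1_f(\QQ,V\oplus W)=H^1_f(\QQ,V)\oplus H^1_f(\QQ,W)$; together with the identification $\bSel_\QQ(\rho_n)=H^1_f(\QQ,\rho_n)$ recalled above (and the same for $\rho_n^\vee$), it suffices to treat each summand separately. These are of two kinds: the self-dual quadratic character $\theta_F=\theta_F^m$, and the pieces $\left(\Ind_F^\QQ\eta^{2(m-i)}\right)\otimes\chic^{(m-i)(1-k)}\otimes\psi^{i-m}\theta_F^i$. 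Using $\theta_F|_{G_F}=\mathbf{1}$ and absorbing the Galois characters $\psi$ and $\chic$ into the inducing character, each piece of the second kind is $\Ind_F^\QQ\xi_j$ for an algebraic Hecke character $\xi_j$ of $F$ (the one giving rise to the form $f_j$).

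For $\theta_F$ I would argue directly. As $p$ splits in $F$, $\theta_F|_{G_p}=\mathbf{1}$, so at $p$ the crystalline condition is the unramified one; the Bloch--Kato condition at the other finite places is also unramified, so any class in $H^1_f(\QQ,\theta_F)$ is unramified everywhere. Restricting it to $G_F$ produces an everywhere-unramified homomorphism $G_F\to\Qp$, which factors through the $p$-part of the ideal class group of $F$ and is therefore torsion, hence zero; since $H^1(\Gal(F/\QQ),\Qp)=0$ for $p$ odd, inflation--restriction gives $H^1_f(\QQ,\theta_F)=0$. Self-duality of $\theta_F$ settles its contribution to $\rho_n^\vee$ as well.

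For the induced pieces I would pass to $F$ by Shapiro's lemma, $H^1(\QQ,\Ind_F^\QQ\xi_j)\cong H^1(F,\xi_j)$, and match the Bloch--Kato local conditions: at finite places away from $p$ both sides carry the unramified condition, while at the split prime $p$ the restriction of $\Ind_F^\QQ\xi_j$ decomposes as $\xi_j|_{G_\mathfrak{p}}\oplus\xi_j|_{G_{\ol{\mathfrak{p}}}}$, so the crystalline condition transports to the product of the crystalline conditions at $\mathfrak{p}$ and $\ol{\mathfrak{p}}$. This identifies $H^1_f(\QQ,\Ind_F^\QQ\xi_j)$ with the Bloch--Kato Selmer group $H^1_f(F,\xi_j)$ of a Hecke character of $F$. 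Now $f_j$ is $p$-ordinary of level prime to $p$, so its $p$-adic $L$-function has no trivial zero, and its near-central $L$-value is nonzero by Jacquet--Shalika \cite{JSh76}; hence the relevant $p$-adic $L$-value is nonzero. Invoking Rubin's main conjecture for the imaginary quadratic field $F$ --- which equates the characteristic ideal of the pertinent Selmer/Iwasawa module with this $p$-adic $L$-function --- this nonvanishing forces the appropriate specialization, namely $H^1_f(F,\xi_j)$, to be finite, and being a $\Qp$-vector space it vanishes. Running the same argument with the dual Hecke characters $\xi_j^{-1}$ (whose associated forms have the same near-central nonvanishing by the functional equation) disposes of the summands of $\rho_n^\vee$.

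The main obstacle is the faithful application of Rubin's main conjecture: one must check that each $\xi_j$ meets its hypotheses (split $p$ of the required ordinary type, and the relevant primitivity/congruence conditions), pin down exactly which specialization of the two-variable $p$-adic $L$-function computes the near-central value of $f_j$, and carry out the standard descent from the equality of characteristic ideals to finiteness of $H^1_f(F,\xi_j)$. Secondary care is needed in transporting the $f$-local conditions through Shapiro's lemma at the primes where $\xi_j$ ramifies, and in confirming that the cyclotomic and Nebentypus twists leave us at a genuinely near-central point so that the Jacquet--Shalika nonvanishing applies.
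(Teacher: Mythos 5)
Your proposal is correct and takes essentially the same route as the paper: reduce via the decomposition \eqref{eqn:symndecomp} to the summands, dispose of $H^1_f(\QQ,\theta_F)$ by the classical class-number/unramified-homomorphism argument, and deduce the vanishing of the modular-form pieces at the (non-central) near-central points from Rubin's main conjecture for imaginary quadratic fields, handling $\rho_n^\vee$ by the analogous decomposition into $\theta_F$ and the contragredient forms. The only difference is one of packaging: where you sketch the Shapiro-lemma transfer to $H^1_f(F,\xi_j)$ and the descent from Rubin's theorem (correctly flagging the hypotheses to check), the paper delegates precisely this deduction to Kato \cite[\S15]{Ka04}.
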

	\begin{proof}
		The decomposition in \eqref{eqn:symndecomp} gives that
		\[ \bSel_\QQ(\rho_n)=H^1_f(\QQ,\rho_n)=H^1_f(\QQ,\theta_F)\oplus\bigoplus_{j=1}^mH^1_f(\QQ,\rho_{f_j}(j(k-1)).
		\]
		The vanishing of $H^1_f(\QQ,\theta_F)$ is a classical result that can be reduced to the finiteness of the class number. For $H^1_f(\QQ,\rho_{f_j}(j(k-1)))$, the vanishing is a deep result. Kato explains in \cite[\S15]{Ka04} how to deduce it from Rubin's results on the main conjecture for imaginary quadratic fields. Note that we are looking at Selmer groups of modular forms at non-central points. The Selmer groups at the central point lie deeper and are the subject of the Birch and Swinnerton-Dyer conjecture.

		For $\bSel_\QQ(\rho_n^\vee)$, the vanishing follows as above. Indeed, $\rho_n^\vee$ decomposes into the Dirichlet character $\theta_F$ and twists of newforms $f_j^\vee$ which are contragredient to the $f_j$.
	\end{proof}

	Next, we state a lemma whose proof we omit.
	\begin{lemma}
		If $V=\bigoplus_{j=0}^mV_j$ is a decomposition of $V$ as $G_\QQ$-modules, then
		\[ \mc{L}_p^{\Gr}(1,V,\mathbf{1})=\prod\mc{L}_p^{\Gr}(1,V_j,\mathbf{1})
		\]
		where the product is over only those $V_j$ that are expected to have a trivial zero.
	\end{lemma}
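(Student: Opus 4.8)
The plan is to unwind Greenberg's definition of $\mc{L}_p^{\Gr}(1,V,\mathbf{1})$, as recalled in \cite[\S1]{H10}, into the determinant of a single linear map constructed functorially from Galois cohomology and the ordinary filtration at $p$, and then to observe that every ingredient of that construction is additive in $V$. Concretely, for an ordinary $V$ whose balanced Selmer group vanishes, Greenberg's invariant is the determinant of a natural isomorphism
\[
\mc{R}_V\colon H_{\glob}^{\exc}(V)\longrightarrow H^{\exc}(\Qp,V)
\]
between two finite-dimensional $\CC_p$-vector spaces whose common dimension is the order of the trivial zero, taken with respect to the canonical bases supplied by the exceptional subquotient of $V|_{G_p}$. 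Here the global space is cut out of $H^1(\QQ,V)$ by the balanced local conditions, the local space is cut out of $H^1(\Qp,V)$ by projection onto the exceptional subquotient, and $\mc{R}_V$ is localization at $p$ followed by that projection. The whole argument will then consist of showing that this data splits compatibly along a $G_\QQ$-module decomposition.

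First I would record that the ordinary filtration $F^+$ on $V|_{G_p}$ is canonical, being determined by the Hodge--Tate weights and the unit-root subspaces, and hence respects any decomposition $V=\bigoplus_{j=0}^m V_j$ of $G_\QQ$-modules: one has $F^+V=\bigoplus_j F^+V_j$, and on passing to graded pieces the exceptional subquotient of $V|_{G_p}$ is the direct sum of the exceptional subquotients of the $V_j|_{G_p}$. Next, since $H^i(\QQ,-)$ and $H^i(\Qp,-)$ are additive functors and localization at $p$ is a natural transformation, the global and local exceptional spaces split compatibly, $H_{\glob}^{\exc}(V)=\bigoplus_j H_{\glob}^{\exc}(V_j)$ and $H^{\exc}(\Qp,V)=\bigoplus_j H^{\exc}(\Qp,V_j)$, and the regulator map becomes block diagonal, $\mc{R}_V=\bigoplus_j \mc{R}_{V_j}$, with no cross terms between distinct summands because the decomposition is $G_\QQ$-equivariant. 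As the determinant of a block-diagonal map is the product of the determinants of its blocks, this already gives $\mc{L}_p^{\Gr}(1,V,\mathbf{1})=\prod_j \det\mc{R}_{V_j}$.

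Finally I would reconcile this product with the stated one. A summand $V_j$ is expected to have a trivial zero precisely when its exceptional subquotient is nonzero, i.e.\ when $\dim H^{\exc}(\Qp,V_j)>0$; for such $j$ the block $\mc{R}_{V_j}$ is exactly the map defining $\mc{L}_p^{\Gr}(1,V_j,\mathbf{1})$. For every other $j$ the exceptional spaces vanish, so $\mc{R}_{V_j}$ is the empty map of determinant $1$ and such summands drop out, leaving the product over only those $V_j$ carrying a trivial zero. The hard part will be the normalization: Greenberg's invariant is only defined once $\bSel_\QQ(V)$ vanishes, and the determinant depends on the choice of canonical $\CC_p$-bases on the exceptional subquotients (entering through the comparison data at $p$). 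I must therefore know that $\bSel_\QQ(V_j)=0$ for each individual summand, which is exactly what the preceding Proposition provides in our setting, and I must verify that the canonical basis of the exceptional subquotient of $V$ is the concatenation of the per-summand bases, so that no spurious scalar is introduced when $\det\mc{R}_V$ is broken into block determinants. Granting this basis-compatibility, the factorization of the determinant is immediate and the lemma follows.
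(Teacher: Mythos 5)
There is nothing in the paper to compare against here: the author introduces this statement with ``we state a lemma whose proof we omit,'' so your proposal must be judged on its own merits, and on those merits it is correct and is essentially the argument the author presumably had in mind. Unwinding Greenberg's definition (as recalled in \cite{H10}, \S1), checking that the ordinary filtration, the exceptional subquotient, the global and local cohomology spaces, and the comparison map at $p$ all split along a $G_\QQ$-decomposition, and then taking the determinant of a block-diagonal map is exactly the right mechanism; your observation that summands with vanishing exceptional subquotient contribute empty blocks of determinant $1$ accounts precisely for the restriction of the product to those $V_j$ expected to have a trivial zero. Two of the worries you flag at the end can be discharged more cheaply than you suggest. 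First, the basis bookkeeping is vacuous: in Greenberg's formulation the invariant is the determinant of an \emph{endomorphism} of the exceptional subquotient $W$, obtained by composing the inverse of the projection onto the unramified component with the projection onto the cyclotomic component, where the identification $H^1(\Qp,W)\cong W\oplus W$ comes from cup product with the two canonical classes of $H^1(\Qp,\QQ_p)$ (evaluation at Frobenius, and $\log_p\circ\,\chic$); these identifications are functorial in $W$, hence automatically compatible with direct sums, and the determinant of an endomorphism requires no choice of basis. Second, you need not invoke the preceding Proposition summand by summand: the balanced Selmer group is cut out by local conditions that are additive in $V$, so $\bSel_\QQ(V)=\bigoplus_j\bSel_\QQ(V_j)$, and the vanishing for $V$ formally yields the vanishing for each $V_j$ (and conversely); this same additivity is what guarantees that your block map restricts to an isomorphism on each block. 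With these two points made precise, your argument is complete.
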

	Applying this to the decomposition of $\rho_n$ with $V_0=\theta_F$ and $V_j=\rho_{f_j}(j(1-k))$, for ${j = 1},\dots,m$, we get that
	\[ \mc{L}_p^{\Gr}(1,\rho_n,\mathbf{1})=\mc{L}_p^{\Gr}(1,\theta_F,\mathbf{1}),
	\]
	since it is easy to verify that, for $j\geq1$, $V_j$ are not expected to have trivial zeroes. This equality is the one mentioned in \eqref{rem:eqn1}. The same reasoning shows that
	\[ \mc{L}_p^{\Gr}(1,\rho_n^\vee,\mathbf{1})=\mc{L}_p^{\Gr}(1,\theta_F,\mathbf{1}),
	\]
	which is equation \eqref{rem:eqn2} above. Let us now state and prove the second theorem of our paper.
	
	\begin{theorem}\label{TheoremB}
		With the hypotheses of Theorem \ref{TheoremA}, we have, for $i=0$ or $1$,
		\begin{equation}
			\mc{L}_p^{\an}(i,\rho_n,\mathbf{1})=\mc{L}_p^{\Gr}(i,\rho_n,\mathbf{1})
		\end{equation}
		and
		\begin{equation}
			\mc{L}_p^{\Gr}(1,\rho_n,\mathbf{1})=-\frac{2\log_p(\alpha_p)}{k-1}.
		\end{equation}
	\end{theorem}
	\begin{proof}
		The equality of the analytic $L$-invariant with Greenberg's $L$-invariant follows from equations \eqref{rem:eqn1}, \eqref{rem:eqn2}, and \eqref{rem:eqn3} proved above.

		To prove the formula for Greenberg's $L$-invariant, we compute the $n=2$ case using the author's result in \cite[Theorem~A]{H-PhD}. Indeed, we have shown in Theorem \ref{TheoremA} that the $L$-invariant is independent of $n$, so we are free to pick an $n$. For $n=2$, Theorem A of \cite{H-PhD} states that in the current situation
		\[ \mc{L}_p^{\Gr}(1,\rho_2,\mathbf{1})=-\frac{2\alpha_p^\prime}{\alpha_p}
		\]
		and we will now explain the notation $\alpha_p^\prime$. We will require some results of Hida theory and we use \cite{Hi11} as a reference. Given a $p$-ordinary newform $f$ of level prime to $p$, Hida theory provides a $p$-adic analytic family $\mc{F}$ of $p$-adic modular forms deforming the ordinary $p$-stabilization $f_p$ of $f$, where
		\[ f_p(z):=f(z)-\beta_pf(pz).
		\]
		The $p$-stabilization has the effect that the $p$th Fourier coefficient of $f_p$ is $\alpha_p$.
		We may write $\mc{F}$ as a $q$-expansion
		\[ \mc{F}=\sum_{n\geq1}a_n(s)q^n
		\]
		where $a_n(s)$ is a $p$-adic analytic function of $s$ on some neighbourhood of $k$ and $a_n(k)$ is the $n$th Fourier coefficient of $f_p$. In particular, $a_p(k)=\alpha_p$. Then $\alpha_p^\prime$ is defined as
		\[ \alpha_p^\prime:=\left.\frac{da_p(s)}{ds}\right|_{s=k}.
		\]
		Alternatively, $\mc{F}$ provides a Galois deformation of $\rho_f$ and $a_p(s)$ is the $p$-adic analytic function that interpolates the unit roots of Frobenius at $p$ in this deformation.
		
		We are thus trying to show that the logarithmic derivative of $a_p(s)$ at $s=k$ is given by $\log_p(a_p(k))/(k-1)$. This follows since $a_p(s)$ is an exponential function (to the correct base). Indeed, the Hida family of $f$ interpolates the inductions of the Hecke characters $\eta^r$ with varying $r$ (see, for example, \cite[\S7]{Hi86}). Explicitly, it is shown in \cite[p.~1337]{Hi11} that
		\begin{equation}\label{eqn:aps}
			a_p(s)=\zeta\exp_p\left((s-1)\frac{\log_p(\ol{\pi})}{h_F\log_p(1+p)}\log_p(1+p)\right)=\zeta\exp_p\left((s-1)\frac{\log_p(\ol{\pi})}{h_F}\right)
		\end{equation}
		where $\zeta$ is a root of unity; indeed, this formula is written there as
		\[ \zeta t^{\log_p(\ol{\pi})/(h_F\log_p(1+p))}
		\]
		where\footnote{Here, $\mathbb{I}$ is the irreducible component of Hida's ordinary $p$-adic Hecke algebra corresponding to $\mc{F}$.} $t=1+T\in\mathbb{I}$. Moreover, a $p$-adic analytic function is obtained from this latter expression by sending $t$ to $(1+p)^{s-1}$. Since the $p$-adic logarithm of a root of unity is zero, taking the logarithm on both sides of \eqref{eqn:aps} yields
		\begin{equation}\label{eqn:logpaps}
			\log_p(a_p(s))=(s-1)\frac{\log_p(\ol{\pi})}{h_F},
		\end{equation}
		so that the logarithmic derivative of $a_p(s)$ at $s=k$ is $\log_p(\ol{\pi})/h_F$. Plugging $s=k$ into \eqref{eqn:logpaps} gives
		\[ \log_p(\alpha_p)=\log_p(a_p(k))=(k-1)\frac{\log_p(\ol{\pi})}{h_F}.
		\]
		Therefore,
		\[ \mc{L}_p^{\Gr}(1,\rho_n,\mathbf{1})=-\frac{2\alpha_p^\prime}{\alpha_p}=-\frac{2\log_p(\alpha_p)}{k-1},
		\]
		as desired.
	\end{proof}
	\begin{remark}\mbox{}
		\begin{enumerate}
			\item This formula for the $L$-invariant generalizes that in \cite[Theorem 3.6.1]{DabDel97}. There, Dabrowski and Delbourgo are considering the case where $n=2$ and $f$ corresponds to an elliptic curve over $\QQ$ with CM. We would like to thank Antonio Lei for bringing this result to our attention.
			\item If one could prove that, for at least one $n$, the analytic $L$-invariant equals Greenberg's $L$-invariant independently of the work of Ferrero--Greenberg and Gross--Koblitz, then the proof above would provide a new proof of their formulas for $(L_p^{\KL})^\prime(0,\theta_F\omega)$.
		\end{enumerate}
	\end{remark}
	
\begin{appendices}
\section{}
In this appendix, we will prove a functional equation for the $p$-adic measure $\mu_{\rho_n}$ when $n\equiv2\mod{4}$. This result is not used in the main body of this article, but, as the referee has pointed out, it explains the relationship between the $L$-invariants at $s=0$ and $s=1$ expressed in equation \eqref{eqn:thmA2} and is a useful addition to the literature. Our approach is simply to use the classical functional equations for $L(s,\theta_F)$ and $L(s,f_j)$ (and their twists) in order to compare the interpolation properties of two sides of a $p$-adic functional equation. We remark that this approach to functional equations for $p$-adic $L$-functions is present in Coates' article \cite[p.~170]{Co91} on his joint work with Perrin-Riou.

We begin by recalling the classical functional equations.

\begin{proposition}
	Let $\theta$ be a primitive Dirichlet character and let $\epsilon_\theta=0$ or $1$ depending on whether $\theta$ is even or odd. Then, for all positive integers $a\equiv\epsilon_\theta\mod{2}$,
	\begin{equation}\label{eqn:ClassFuncEqDir}
		L(a,\theta)=\frac{\tau(\theta)(-2\pi i)^a}{2\Gamma(a)\mf{f}_\theta^a}L(1-a,\theta^{-1}).
	\end{equation}
\end{proposition}
See, for instance, \cite[p.~12]{Iw-LpL}. Note that the fraction on the right-hand side is exactly the period $\Omega_{a,\theta,\mathbf{1}}$. This explains our choice of period in \eqref{eqn:OmegaDirp-adicL}.

We will need to recall a bit more to state the functional equation for newforms. We use \cite[\S\S 4.3 and 4.6]{Miy-MF} as a reference. Let $g$ be a newform of weight $\kappa$, level $\Gamma_1(\mathcal{N})$, with Nebentypus $\theta$. Denote by $W_\mc{N}$ the level $\mc{N}$ (and weight $\kappa$) Atkin--Lehner operator given explicitly by
\[ W_\mc{N}(g)(z)=\frac{1}{(\sqrt{\mc{N}}z)^\kappa}g(-1/\mc{N}z).
\]
If $g$ is a (normalized) newform, then $W_\mc{N}(g)$ may not be normalized, and in fact will never be when $k$ is odd. Indeed, \cite[(4.6.18)]{Miy-MF} shows that $W_\mc{N}(g)$ is a multiple of $g^\vee$ and a simple computation shows that $W_\mc{N}^2$ acts by $(-1)^\kappa$. We will now specialize to the case where $\kappa$ is odd and the Fourier coefficients of $g$ are totally real. As pointed out in \cite[p.~34, Remark 2]{Ri77}, this implies that $g$ has CM by some imaginary quadratic field $F$ and $\theta=\theta_F$. Clearly, $g^\vee=g$, so $g$ is an eigenvector of $W_\mc{N}$. Denote its eigenvalue by $w_g$ and note that it must equal $\pm i$.

\begin{proposition}
	Suppose $g$ is as above and $\chi$ is a primitive Dirichlet character of conductor prime to $\mc{N}$. Then, for all $s\in\CC$,
	\begin{equation}\label{eqn:ClassFuncEqNewform}
		\left(\frac{\mf{f}_\chi\sqrt{\mc{N}}}{2\pi}\right)^{\!s}\!\cdot\Gamma(s)L(s,g,\chi)=i^\kappa w_g\theta(\mf{f}_\chi)\chi(-\mc{N})\frac{\tau(\chi)}{\tau(\chi^{-1})}\left(\frac{\mf{f}_\chi\sqrt{\mc{N}}}{2\pi}\right)^{\!\kappa-s}\!\cdot\Gamma(\kappa-s)L(\kappa-s,g,\chi^{-1}).
	\end{equation}
\end{proposition}
See \cite[Theorem 4.3.12]{Miy-MF}.

In order to prove a nice functional equation, we will judiciously pick our periods $\Omega_g^\pm$. We take advantage of the fact that if $\kappa$ is odd, then, for $a\in\ZZ$, we have that $a$ and $\kappa-a$ have opposite parities. Given a choice of period $\Omega^-_g$, we choose
\begin{equation}\label{eqn:omegagplus}
	\Omega_g^+=\begin{cases}
					w_g\sqrt{\mc{N}}\Omega_g^-	&\kappa\equiv1\mod{4}\\
					\displaystyle\frac{w_g}{\sqrt{\mc{N}}}\Omega_g^-	&\kappa\equiv3\mod{4}.
				\end{cases}
\end{equation}
The next lemma shows that these choices are acceptable.

\begin{lemma}
	If $L(1,g)\in(-2\pi i)\Omega^-_g\QQ(g)$, then $L(\kappa-1,g)\in(-2\pi i)^{\kappa-1}\Omega^+_g\QQ(g)$.
\end{lemma}
\begin{proof}
	By the above functional equation,
	\[ \frac{L(\kappa-1,g)\sqrt{\mc{N}}^{\pm1}}{w_g(-2\pi i)^{\kappa-1}\Omega^-_g}=\frac{1}{(\kappa-2)!}\cdot\frac{L(1,g)}{(-2\pi i)\Omega^-_g}\cdot
		\begin{cases}
			\sqrt{\mc{N}}^{1-\kappa}	&\kappa\equiv1\mod{4}\\
			\sqrt{\mc{N}}^{3-\kappa}	&\kappa\equiv3\mod{4},
		\end{cases}
	\]
	which is indeed in $\QQ(g)$.
\end{proof}

We now proceed to prove functional equations for the $p$-adic measures $\mu_{\theta_F}$ and $\mu_{f_j}$ used in the definition of $\mu_{\rho_n}$. In what follows, $\phi$ always denotes either the trivial character or a (primitive) $p$-power conductor Dirichlet character of the second kind (i.e.\ with conductor divisible by $p^2$). In particular, $\phi$ is always \textit{even}. We will use the well-known fact that the $b$th branch of a $p$-adic measure on $\Gamma^\times$ is determined by its values at $\omega^b\phi\langle\cdot\rangle^a$ for a fixed integer $a$ and varying $\phi$ (where we are allowed to omit finitely many characters). We remind the reader that all $p$-power conductor Dirichlet characters are of the form $\omega^b\phi$. We also recall the involution $\mu\mapsto\mu^\#$ defined by
\[ \int_{\Gamma^\times}\chi d\mu^\#=\int_{\Gamma^\times}\chi^{-1} d\mu
\]
for all (not necessarily finite order) characters $\chi\in\mf{X}^\times$.

\begin{proposition}
	Let $\theta$ be an odd quadratic Dirichlet character of conductor prime to $p$. Then,
	\begin{equation}\label{eqn:funceqnp-adicDirChar}
		d\mu_\theta=\chic d\mu_\theta^\#.
	\end{equation}
\end{proposition}
\begin{proof}
	For an even branch $b$, we evaluate both sides at $\omega^b\phi\neq\mathbf{1}$. Using the interpolation property \eqref{eqn:Dirp-adicL} and the classical functional equation, the left-hand side is
	\begin{align*}
		\int_{\Gamma^\times}\omega^b\phi d\mu_\theta&=\frac{L(0,\theta\omega^{-b}\phi^{-1})}{\Omega_{0,\theta,\omega^b\phi}} \\
			&=L(1,\theta\omega^b\phi)\frac{2\mf{f}_{\theta\omega^b\phi}}{\tau(\theta\omega^b\phi)(-2\pi i)} \\
			&=\frac{L(1,\theta\omega^b\phi)}{\Omega_{1,\theta,\omega^{-b}\phi^{-1}}}\\
			&=\int_{\Gamma^\times}\omega^{-b}\phi^{-1}\chic d\mu_\theta
	\end{align*}
	This is exactly the right-hand side of \eqref{eqn:funceqnp-adicDirChar} integrated against $\omega^b\phi$.
	
	For an odd branch $b$, we evaluate at $\omega^b\phi\langle\cdot\rangle$. Taking $b^\prime=1-b$, the above computation verifies the equation.
\end{proof}

Given a positive prime-to-$p$ integer $M$, let $\Art(M)$ denote its Artin symbol in $\Gamma^\times$, i.e. for all $\chi\in\mf{X}^\times$, we have $\chi(\Art(M))=\chi(M)$. The associated Dirac delta distribution (which is a $p$-adic measure) is $\delta_{\Art(M)}$ and is such that
\[\int_{\Gamma^\times}\chi d\delta_{\Art(M)}=\chi(M).
\]
\begin{proposition}
	Let $g$ be a $p$-ordinary newform of odd weight $\kappa$, level $\Gamma_1(\mc{N})$ with $p\nmid\mc{N}$, Nebentypus $\theta$, and totally real Fourier coefficients. Then,
	\begin{equation}
		\chic^{(\kappa-1)/2}d\mu_g=\mc{N}^{\epsilon_b}\cdot\left(\chic\left(\chic^{(\kappa-1)/2}d\mu_g\right)^\#\right)\ast d\delta_{\Art(\mc{N})}^\#,
	\end{equation}
	where $\epsilon_b\in\{0,1\}$ is such that $b\equiv\epsilon_b\mod{2}$.
\end{proposition}
\begin{proof}
	For an even branch $b$, we again evaluate both sides at $\omega^b\phi\neq\mathbf{1}$. Note that $\theta(\mf{f}_{\omega^b\phi})=1$ (since $\theta(p)=1$) and $(\omega^{b}\phi)(-1)=1$. Write $\mf{f}_{\omega^b\phi}=p^r$. Using the interpolation property \eqref{eqn:MFp-adicL} and the classical functional equation, the left-hand side is
	\begin{align}
		\int_{\Gamma^\times}\omega^b\phi\chic^{(\kappa-1)/2}d\mu_g&=\frac{L\left(\frac{\kappa-1}{2},g,\omega^{-b}\phi^{-1}\right)}{\Omega_{(\kappa-1)/2,g,\omega^b\phi}}\nonumber\\
		&=(\omega^{-b}\phi^{-1})(\mc{N})i^\kappa w_g\frac{\tau(\omega^{-b}\phi^{-1})}{\tau(\omega^b\phi)}\frac{\mf{f}_{\omega^b\phi}\sqrt{\mc{N}}}{2\pi}\frac{\Gamma\left(\frac{\kappa+1}{2}\right)}{\Gamma\left(\frac{\kappa-1}{2}\right)}L\left(\frac{\kappa+1}{2},g,\omega^b\phi\right)\nonumber\\
		&\text{\hspace{1.3em}}\times\frac{\Gamma\left(\frac{\kappa-1}{2}\right)\mf{f}_{\omega^b\phi}^{(\kappa-1)/2}}{\tau(\omega^{-b}\phi^{-1})(-2\pi i)^{(\kappa-1)/2}\alpha_p^r\Omega_g^{(-1)^{(\kappa-1)/2}}}\nonumber\\
		&=(\omega^{-b}\phi^{-1})(\mc{N})i^{\kappa-1}w_g\sqrt{\mc{N}}\frac{\Gamma\left(\frac{\kappa+1}{2}\right)\mf{f}_{\omega^b\phi}^{(\kappa+1)/2}}{\tau(\omega^b\phi)(-2\pi i)^{(\kappa+1)/2}\alpha_p^r\Omega_g^{(-1)^{(\kappa-1)/2}}}L\left(\frac{\kappa+1}{2},g,\omega^b\phi\right).\label{eqn:longmess}
	\end{align}
	If $\kappa\equiv1\mod{4}$, then
	\[ \frac{i^{\kappa-1}w_g\sqrt{\mc{N}}}{\Omega_g^{(-1)^{(\kappa-1)/2}}} = \frac{w_g\sqrt{\mc{N}}}{\Omega_g^+}=\frac{1}{\Omega_g^-}
	\]
	by our choice of periods in \eqref{eqn:omegagplus}. Similarly, when $\kappa\equiv3\mod{4}$,
	\[ \frac{i^{\kappa-1}w_g\sqrt{\mc{N}}}{\Omega_g^{(-1)^{(\kappa-1)/2}}} = \frac{-w_g\sqrt{\mc{N}}}{\Omega_g^-}=\frac{1}{\Omega_g^+}
	\]
	(since $w_g^{-1}=-w_g$). In both cases, this can be written as
	\[ \frac{1}{\Omega_g^{(-1)^{(\kappa+1)/2}}}.
	\]
	Substituting this back in to \eqref{eqn:longmess} yields
	\[ (\omega^{-b}\phi^{-1})(\mc{N})\frac{L\left(\frac{\kappa+1}{2},g,\omega^b\phi\right)}{\Omega_{(\kappa+1)/2,g,\omega^{-b}\phi^{-1}}},
	\]
	which is the desired result.

	For an odd branch $b$, we evaluate at $\omega^b\phi\langle\cdot\rangle$. Using that we already know the result on even branches, we get
	\begin{align*}
		\int_{\Gamma^\times}\omega^b\phi\langle\cdot\rangle\chic^{(\kappa-1)/2}d\mu_g&=\int_{\Gamma^\times}\omega^{b-1}\phi\chic^{(\kappa+1)/2}d\mu_g\\
			&=(\omega^{1-b}\phi^{-1})(\mc{N})\int_{\Gamma^\times}\omega^{1-b}\phi^{-1}\chic^{(k-1)/2}d\mu_g\\
			&=\mc{N}\langle\mc{N}\rangle^{-1}(\omega^{-b}\phi^{-1})(\mc{N})\int_{\Gamma^\times}\omega^{-b}\phi^{-1}\langle\cdot\rangle^{-1}\chic^{(\kappa+1)/2}d\mu_g,
	\end{align*}
	as desired.
\end{proof}

	We can now package up all these functional equations to obtain one for $\mu_{\rho_n}$. We denote by $\mf{f}_{\rho_n}$ the Artin conductor of $\rho_n$ and note that 
	\[ \mf{f}_{\rho_n}=\mf{f}_\theta\cdot\prod_{j=1}^mN_j.
	\]
	\begin{theorem}\label{thm:appendix}
		With the hypotheses of \ref{TheoremA} and $n=2m$, with $m$ odd, we have that
		\begin{equation}
			d\mu_{\rho_n}=\left(\frac{\mf{f}_{\rho_n}}{|\Delta_F|}\right)^{\epsilon_b}\cdot\left(\chic d\mu_{\rho_n}^\#\right)\ast d\delta_{\Art(\mf{f}_{\rho_n})}^\#\ast d\delta_{\Art(|\Delta_F|^{-1})}^\#,
		\end{equation}
		where $\epsilon_b\in\{0,1\}$ is such that $b\equiv\epsilon_b\mod{2}$.
	\end{theorem}
	\begin{proof}
		This basically follows from the definition of $\mu_{\rho_n}$. We just want to point out that $|\Delta_F|=\mf{f}_{\theta_F}$ and that its inverse occurs in the functional equation because $\mf{f}_\theta$ does not appear in \eqref{eqn:funceqnp-adicDirChar}.
	\end{proof}

	We end by pointing out that this implies that
	\[ L_{p,1}(s,\rho_n)=L_{p,0}(1-s,\rho_n)\left\langle\frac{\mf{f}_{\rho_n}}{|\Delta_F|}\right\rangle^{1-s}.
	\]
\end{appendices}

%\bibliographystyle{plain.bst}	%Can't use numbered references with oupau.cls
%\bibliographystyle{plainnat}
%\bibliography{T_Master-IMRN}

\begin{thebibliography}{00}
\bibitem[Benois~(2013)]{Be11}
Denis Benois.
\newblock Trivial zeros of $p$-adic ${L}$-functions at near central points,
  2013.
\newblock Online at \emph{J.\ Inst.\ Math.\ Jussieu}, \href{http://dx.doi.org/10.1017/S1474748013000261}{doi: 10.1017/S1474748013000261}

\bibitem[Berthelot et~al.~(2004)]{CpIII}
Pierre Berthelot, Jean-Marc Fontaine, Luc Illusie, Kazuya Kato, and Michael
  Rapoport, editors.
\newblock \emph{Cohomologies $p$-adiques et applications arithm\'etiques, III},
  volume 295 of \emph{Ast\'erisque}, 2004. SMF.

\bibitem[Bloch and Kato~(1990)]{BK90}
Spencer Bloch and Kazuya Kato.
\newblock ${L}$-functions and {T}amagawa numbers of motives.
\newblock In \emph{\cite{GFest}}, pages 333--400, 1990.

\bibitem[Borel and Casselman~(1979)]{Corv}
Armand Borel and William Casselman, editors.
\newblock \emph{Automorphic forms, representations, and ${L}$-functions},
  volume~33 of \emph{Proceedings of the Symposium in Pure Mathematics}, 1979.
  AMS.
\newblock in two parts.

\bibitem[Cartier et~al.~(1990)]{GFest}
Pierre Cartier, Luc Illusie, Nicholas~M. Katz, G\'erard Laumon, Yuri Manin, and
  Ken Ribet, editors.
\newblock \emph{The {G}rothendieck {F}estschrift.~{V}ol.~{I}}, volume~86 of
  \emph{Progress in Mathematics}, 1990. Birkh{\"a}user.

\bibitem[Coates~(1991)]{Co91}
John Coates.
\newblock Motivic $p$-adic ${L}$-functions.
\newblock In \emph{\cite{LFA}}, pages 141--172, 1991.

\bibitem[Coates and Taylor~(1991)]{LFA}
John Coates and Martin~J. Taylor, editors.
\newblock \emph{${L}$-functions and arithmetic}, volume 153 of \emph{London
  Mathematical Society Lecture Note Series}, 1991. Cambridge University Press.
\newblock Proceedings of the London Mathematical Society Symposium held at the
  University of Durham, Durham, June 30--July 11, 1989.

\bibitem[Colmez~(2000)]{Cz00}
Pierre Colmez.
\newblock Fonctions ${L}$ $p$-adiques.
\newblock \emph{Ast\'erisque}, 266:\penalty0 21--58, 2000.
\newblock S\'eminaire Bourbaki 1998--1999, exp.\ n$^\text{o}$851.

\bibitem[Dabrowski~(1993)]{Dab93}
Andrzej Dabrowski.
\newblock Admissible $p$-adic {$L$}-functions of automorphic forms.
\newblock \emph{Moscow Univ.\ Math.\ Bull.}, 48\penalty0 (2):\penalty0 6--10,
  1993.
\newblock English translation of original Russian.

\bibitem[Dabrowski and Delbourgo~(1997)]{DabDel97}
Andrzej Dabrowski and Daniel Delbourgo.
\newblock ${S}$-adic ${L}$-functions attached to the symmetric square of a
  newform.
\newblock \emph{Proc.~London Math.~Soc.~(3)}, 74\penalty0 (3):\penalty0
  559--611, 1997.

\bibitem[Dasgupta, Darmon, and Pollack~(2011)]{DDP11}
Samit Dasgupta, Henri Darmon, and Robert Pollack.
\newblock Hilbert modular forms and the {G}ross--{S}tark conjecture.
\newblock \emph{Ann.~of~Math.~(2)}, 174\penalty0 (1):\penalty0 439--484, 2011.

\bibitem[Deligne~(1971)]{D71}
Pierre Deligne.
\newblock Formes modulaires et repr\'esentations $\ell$-adiques.
\newblock \emph{Lecture Notes in Mathematics}, 179:\penalty0 139--172, 1971.
\newblock S\'eminaire Bourbaki 1968--1969, exp.\ n$^\text{o}$355.

\bibitem[Deligne~(1979)]{D79}
Pierre Deligne.
\newblock Valeurs de fonctions ${L}$ et p\'eriodes d'int\'egrales.
\newblock In \emph{\cite{Corv}, part 2}, pages 313--346, 1979.

\bibitem[Ferrero and Greenberg~(1978)]{FeG78}
Bruce Ferrero and Ralph Greenberg.
\newblock On the behavior of $p$-adic ${L}$-functions at $s=0$.
\newblock \emph{Invent.~Math.}, 50\penalty0 (1):\penalty0 91--102, 1978.

\bibitem[Gelbart et~al.~(1981)]{AFRTA}
Stephen Gelbart, G{\"u}nter Harder, Kenkichi Iwasawa, Herv{\'e} Jacquet,
  Nicholas~M. Katz, Ilya Piatetski-Shapiro, Srinivasacharya Raghavan, Takuro
  Shintani, Harold~M.\ Stark, and Don Zagier, editors.
\newblock \emph{Automorphic forms, representation theory and arithmetic},
  volume~10 of \emph{Tata Institute of Fundamental Research Studies in
  Mathematics}, 1981. Springer-Verlag.
\newblock Papers presented at the International Colloquium held in Bombay,
  January 8--15, 1979.

\bibitem[Greenberg~(1994)]{G94}
Ralph Greenberg.
\newblock Trivial zeroes of $p$-adic ${L}$-functions.
\newblock In \emph{\cite{pMBSD}}, pages 149--174, 1994.

\bibitem[Gross~(1981)]{Gr81}
Benedict~H. Gross.
\newblock $p$-adic ${L}$-series at $s=0$.
\newblock \emph{J. Fac. Sci. Univ. Tokyo Sect. IA Math.}, 28\penalty0
  (3):\penalty0 979--994, 1981.

\bibitem[Gross and Koblitz~(1979)]{GrKo79}
Benedict~H. Gross and Neal Koblitz.
\newblock Gauss sums and the $p$-adic $\Gamma$-function.
\newblock \emph{Ann.~of~Math.~(2)}, 109\penalty0 (3):\penalty0 569--581, 1979.

\bibitem[Harron~(2009)]{H-PhD}
Robert Harron.
\newblock \emph{{$L$}-invariants of low symmetric powers of modular forms and
  {H}ida deformations}.
\newblock PhD thesis, Princeton University, 2009.

\bibitem[Harron~(2012)]{H10}
Robert Harron.
\newblock On {G}reenberg's ${L}$-invariant of the symmetric sixth power of an
  ordinary cusp form,
\newblock \emph{Compositio~Math.}, 148\penalty0 (4):\penalty0
  1033--1050, 2012.

\bibitem[Harron and Lei~(2012)]{HLei12}
Robert Harron and Antonio Lei.
\newblock Iwasawa theory for symmetric powers of {CM} modular forms
  at non-ordinary primes, 2012.
\newblock To appear in \emph{J.\ Th\'{e}or.\ Nombres Bordeaux}.

\bibitem[Hida~(1986)]{Hi86}
Haruzo Hida.
\newblock Iwasawa modules attached to congruences of cusp forms.
\newblock \emph{Ann.~Sci.~\'Ecole Norm.~Sup.}, 19\penalty0 (2):\penalty0
  231--273, 1986.

\bibitem[Hida~(2004)]{Hi04}
Haruzo Hida.
\newblock Greenberg's $\mathcal{L}$-invariants of adjoint square {G}alois
  representations.
\newblock \emph{Int.~Math.~Res.~Not.}, 2004:\penalty0 3177--3189, 2004.

\bibitem[Hida~(2007)]{Hi07}
Haruzo Hida.
\newblock On a generalization of the conjecture of
  {M}azur--{T}ate--{T}eitelbaum.
\newblock \emph{Int.~Math.~Res.~Not.}, 2007:\penalty0 Art.~ID rnm102, 2007.

\bibitem[Hida~(2011)]{Hi11}
Haruzo Hida.
\newblock Constancy of adjoint $\mc{L}$-invariant.
\newblock \emph{J.~Number Theory}, 131\penalty0 (7):\penalty0 1331--1346, 2011.

\bibitem[Iwasawa~(1972)]{Iw-LpL}
Kenkichi Iwasawa.
\newblock \emph{Lectures on $p$-adic {$L$}-functions}, volume~74 of
  \emph{Annals of Mathematics Studies}.
\newblock Princeton University Press, 1972.

\bibitem[Jacquet and Shalika~(1976)]{JSh76}
Herv\'e Jacquet and Joseph~A. Shalika.
\newblock A non-vanishing theorem for zeta functions of $\gl_{n}$.
\newblock \emph{Invent.~Math.}, 38\penalty0 (1):\penalty0 1--16, 1976.

\bibitem[Kato~(2004)]{Ka04}
Kazuya Kato.
\newblock $p$-adic hodge theory and values of zeta functions of modular forms.
\newblock In \emph{\cite{CpIII}}, pages 117--290, 2004.

\bibitem[Katz~(1981)]{K81}
Nicholas~M.\ Katz.
\newblock Crystalline cohomology, {D}ieudonn{\'e} modules, and {J}acobi sums.
\newblock In \emph{\cite{AFRTA}}, pages 165--246, 1981.

\bibitem[Mazur and Stevens~(1994)]{pMBSD}
Barry Mazur and Glenn Stevens, editors.
\newblock \emph{$p$-adic monodromy and the {B}irch and {S}winnerton-{D}yer
  conjecture}, volume 165 of \emph{Contemporary Mathematics}, 1994. AMS.
\newblock Papers from the workshop held at Boston University, August 12--16,
  1991.

\bibitem[Mazur, Tate, and Teitelbaum~(1986)]{MTT86}
Barry Mazur, John Tate, and Jeremy Teitelbaum.
\newblock On $p$-adic analogues of the conjectures of {B}irch and
  {S}winnerton-{D}yer.
\newblock \emph{Invent.~Math.}, 84\penalty0 (1):\penalty0 1--48, 1986.

\bibitem[Miyake~(1989)]{Miy-MF}
Toshitsune Miyake.
\newblock \emph{Modular forms}.
\newblock Springer Monographs in Mathematics. Springer, 1989.
\newblock Translated from the Japanese original.

\bibitem[Perrin-Riou~(1998)]{PR98}
Bernadette Perrin-Riou.
\newblock Z\'eros triviaux des fonctions ${L}$ $p$-adiques, un cas particulier.
\newblock \emph{Compositio~Math.}, 114\penalty0 (1):\penalty0 37--76, 1998.

\bibitem[Ribet~(1977)]{Ri77}
Ken Ribet.
\newblock Galois representations attached to eigenforms with nebentypus.
\newblock In \emph{\cite{MF-V}}, 1977.

\bibitem[Serre and Zagier~(1977)]{MF-V}
Jean-Pierre Serre and Don Zagier, editors.
\newblock \emph{Modular functions of one variable {V}}, volume 601 of
  \emph{Lecture Notes in Mathematics}, 1977. Springer-Verlag.
\newblock Proceedings of the Second International Conference, University of
  Bonn, July 2--14, 1976.

\bibitem[Wiles~(1988)]{W88}
Andrew Wiles.
\newblock On ordinary $\lambda$-adic representations associated to modular
  forms.
\newblock \emph{Invent.~Math.}, 94\penalty0 (3):\penalty0 529--573, 1988.
\end{thebibliography}

\end{document}